\newtheorem{thm}{Theorem}[section]
\newtheorem{cor}[thm]{Corollary}
\newtheorem{lem}[thm]{Lemma}
\newtheorem{prop}[thm]{Proposition}
\theoremstyle{definition}
\newtheorem{defn}[thm]{Definition}
\newtheorem{exe}[thm]{Example}
\newtheorem{rem}[thm]{Remark}
\theoremstyle{remark}
\numberwithin{equation}{section}
\newcommand{\SL}{\textnormal{SL}}
\newcommand{\Z}{\mathbf{Z}}
\newcommand{\R}{\mathbf{R}}
\newcommand{\Q}{\mathbf{Q}}
\newcommand{\K}{\mathbf{K}}
\newcommand{\tpr}{\begin{tiny}\noindent Proof:}
\title{Compactly presented groups}
\author{Yves de Cornulier}%
\address{IRMAR \\ Campus de Beaulieu \\
35042 Rennes Cedex, France}
\email{yves.decornulier@univ-rennes1.fr}
\date{\today}
\subjclass[2000]{20F05 (Primary); 22D05,57M07 (Secondary)}
\begin{document}

\maketitle



%



\begin{abstract}
This survey purports to be an elementary introduction to compactly presented groups, which are the analogue of finitely presented groups in the broader realm of locally compact groups. In particular, compact presentation is interpreted as a coarse simple connectedness condition on the Cayley graph, and in particular is a quasi-isometry invariant.

In the appendix, an example of a Lie group, not quasi-isometric to any homogeneous graph, is given; the short argument relies on results of Trofimov and Pansu, anterior to~1990. 
\end{abstract}

\section{Introduction}

In geometric group theory, the principal object of study is often a discrete finitely generated group $G$. Its geometry is the geometry of its Cayley graph $\Gamma(G,S)$, where $S$ is a finite generating set, whose vertices are elements of $G$ and edges are of the form $\{g,gs\}$ for $g\in G,s\in S\cup S^{-1}-\{1\}$. It has long been known that the study of the geometry of those groups can be eased when $G$ sits as a cocompact lattice in a non-discrete locally compact group, like a a connected Lie group. This holds, for instance, when $G$ is a finitely generated torsion-free nilpotent group, or more generally a polycyclic group (see \cite[Chapters 2--4]{Rag}). For some classes of solvable groups, for instance a matrix group over $\Z[1/n]$, the target group has to be a linear group over a product of local fields. It thus appears that the natural setting for the geometric study of groups is to consider compactly generated locally compact groups, the compact generation reducing to finite generation in the case of discrete groups. In the locally compact setting, the Cayley graph with respect to a compact generating subset appears at first sight as awful as for instance it has infinite degree; however its large scale geometry has a reasonable behaviour: for instance, it is always quasi-isometric to a graph of bounded degree (obtained by restricting to a ``separated net", see \cite[\S 1.A]{Gromov93}), although this graph cannot always be chosen homogeneous (see Appendix \ref{secho}).

Compactly presented groups generalize to locally compact groups what finitely presented groups are to discrete groups. On the other hand they are far less well-known\footnote{A Google search on February 2, 2009 gives 13500 occurences for ``finitely presented group" and only 5 for ``compactly presented group", including quotation marks in both cases.}, although they were introduced and studied by the German School in the sixties and seventies \cite{Kn,Behr,Ab1}.


A locally compact group $G$ is {\it compactly presented} if it has a compact subset $S$ such that $G$, as an abstract group, has a presentation with $S$ as a set of generators, and relators of bounded length. For instance, a discrete group is compactly presented if and only if it is finitely presented.

Other compactly presented locally compact groups include connected Lie groups, reductive algebraic groups over local fields, but not all algebraic groups over local field: for instance, if $\K$ is any non-Archimedean local field, then the compactly generated group $\SL_2(\K)\ltimes\K^2$ is not compactly presented (see Example \ref{slh}), although compactly generated. An extensive and thorough study led Abels \cite{Ab} to characterize linear algebraic groups over a local field of characteristic zero that are compactly presented. In contrast, the case of finite characteristic is not settled yet.


The following note intends to describe the basics on compactly presented groups, with the hope to make them better known. Properties (\ref{e1}), (\ref{e2}), (\ref{e4}) below were obtained in \cite{Ab1}; we obtain here these results with in mind the geometric point of view initiated by Gromov.

\begin{enumerate}
\item\label{e1} Being compactly presented is preserved by extensions, and by quotients by closed normal subgroups that are compactly generated as normal subgroups (Lemmas \ref {ext} and \ref{lem:bp_to_quot}).
\item\label{e2} Any compactly generated locally compact group has a covering which is a compactly presented locally compact group (a covering of $G$ means a locally compact group $H$ whose quotient by some normal discrete subgroup is isomorphic to $G$) (Proposition \ref{cov}).
\item\label{e3} Among compactly generated locally compact groups, to be compactly presented is a quasi-isometry invariant (Corollary \ref{qinv}). In particular, it is inherited by and from closed cocompact subgroups.
\item\label{e4} If, in an exact sequence of locally compact groups $1\to N\to G\to Q\to 1$, the group $Q$ is compactly presented and $G$ is compactly generated, then $N$ is compactly generated as a normal subgroup of $G$. In particular, if $N$ is central, then it is compactly generated. (Proposition \ref{prop:ker_comp_pres_quot})
\end{enumerate}

Section~\ref{bp} introduces the new and more general notion of groups {\it boundedly generated} by some subset. Compactly presented groups are introduced in Section~\ref{cpg}.
In Section~\ref{rips}, we explain the link between simple connectedness and compact presentedness, which notably allows to obtain the two latter results (\ref{e3}) and (\ref{e4}). Some further examples are developed in Section~\ref{appex}.

\section{Boundedly presenting subsets}\label{bp}

\begin{defn}
Let $G$ be a group and $S\subset G$ a subset. We say that $G$ is
\textit{boundedly presented} by $S$ if $G$ has a presentation with
$S$ as set of generators, and relations of bounded length.
\end{defn}~

\begin{exe}~
\begin{itemize}
\item If $S$ is finite, then $G$ is finitely presented if and only
if $G$ is boundedly presented by $S$.

\item The group $G$ is always boundedly presented by itself.
Indeed, as set of relations one can choose relations of the form
$gh=k$, $g,h,k\in G$, which have length 3. \end{itemize}
\end{exe}

\begin{defn}
Let $G$ be a group and $S$ a generating subset. We say that $S$ is
a \textit{defining} (generating) subset if $G$ is presented with
$S$ as set of generators, subject to the relations $gh=k$ for
$g,h,k\in S$.
\end{defn}

Clearly, if $S$ is a defining generating subset of $G$, then $G$
is boundedly presented by $S$. This has a weak converse
\cite[Hilfssatz 1]{Behr}. Write $S^n=\{s_1\dots
s_n:\;s_1,\dots,s_n\in S\}$.

\begin{lem}
Let $G$ be boundedly presented by a subset $S$. Then, for some
$n$, $S^n$ is a defining generating subset of
$G$.\label{lem:boun_pres_def_gen}
\end{lem}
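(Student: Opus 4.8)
The plan is to start from the assumption that $G$ is boundedly presented by $S$, say with all relators of length at most $N$, and to show that for a suitable $n$ (I expect $n$ related to $N$) the set $S^n$ becomes a defining generating set. First I would set up the presentation $G = \langle S \mid R \rangle$ where every relator $r \in R$ has length $\le N$, meaning $r = s_1^{\pm 1}\cdots s_k^{\pm 1}$ with $k \le N$ and each $s_i \in S$. A preliminary reduction is useful: I would first arrange that $S$ is symmetric and contains the identity (replacing $S$ by $S \cup S^{-1} \cup \{1\}$ only enlarges things harmlessly and does not change the class of defining/bounded-presentation properties up to adjusting $n$), so that $S^m \subseteq S^{m+1}$ and each $S^m$ is itself symmetric.

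Next I would compare the presentation $\langle S \mid R\rangle$ with the candidate presentation $\langle S^n \mid gh = k \text{ for } g,h,k \in S^n\rangle$ for $n$ to be chosen. There is a surjection from the free group on $S^n$ onto $G$, and the defining relations $gh=k$ hold in $G$ by construction; the content is that these multiplication relations already force everything. The key idea is that a presentation with relators of length $\le N$ can be rewritten using only the ``multiplication table'' relations among longer words. Concretely, each original relator $r = s_1 \cdots s_k$ (length $\le N$) expresses an equality in $G$; if we take $n \ge N$, then $r$ and each of its prefixes are words of length $\le n$, hence represent elements of $S^n$, and the relation that $r$ equals the identity can be broken into a telescoping chain of multiplication relations $g \cdot h = k$ each of whose three entries lies in $S^n$. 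The main verification is therefore that the multiplication relations on $S^n$ suffice to derive (i) all the original relators $R$, and (ii) that any word in the generators $S^n$ can be collapsed to a single generator using only multiplication relations.

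The heart of the argument, and the step I expect to be the main obstacle, is point (ii): given a product $g_1 g_2 \cdots g_m$ of elements of $S^n$, show that in the group presented only by the multiplication relations one can reduce it to a single element of $S^n$ using the relations $gh=k$, where the crucial requirement is that every intermediate partial product stays inside $S^n$ so that the relation we wish to apply is actually available. This is where the choice of $n$ matters: a partial product $g_1 \cdots g_j$ of elements of $S^n$ lies a priori only in $S^{nm}$, not in $S^n$, so one cannot naively apply a single multiplication relation at the end. The resolution is to reduce one symbol at a time \emph{internally}: writing each $g_i = s_{i,1}\cdots s_{i,\ell_i}$ with $\ell_i \le n$ in terms of the original generators, I would process the whole word letter by letter over $S$, maintaining a running ``accumulator'' which is always a single element of $S^n$ (possible precisely because appending one generator $s \in S$ to an element of $S^{n-1}$ lands in $S^n$, and then a multiplication relation rewrites it). Taking $n \ge N$ (so that the original relators are expressible) and using that concatenating a single $S$-letter keeps us within $S^n$ when the accumulator sits in $S^{n-1}$, one can push the reduction through, possibly at the cost of enlarging $n$ by a bounded amount.

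Finally I would assemble these pieces: the map $\langle S^n \mid \text{mult. relations}\rangle \to G$ is surjective because $S$ generates and $S \subseteq S^n$; it is injective because any word in the kernel can, by (ii), be reduced to a single generator equal to $1$ in $G$, and since the original relations $R$ are consequences of the multiplication relations by (i), the kernel of the map from the free group coincides with the normal closure of the multiplication relations. Hence $S^n$ is a defining generating subset, which is exactly the claim. I would keep explicit bookkeeping of $n$ to a minimum, since the statement only asserts existence of some $n$; the only place real care is needed is the invariant in step (ii) guaranteeing that each multiplication relation invoked is genuinely one of the allowed relations $gh=k$ with $g,h,k \in S^n$.
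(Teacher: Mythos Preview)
Your step (ii) does not work, and this is a genuine gap. The claim that an arbitrary word $g_1\cdots g_m$ in the generators $S^n$ can be collapsed to a single generator using only multiplication relations is simply false: if it held, $\tilde G$ would be in bijection with $S^n$, whereas $\tilde G$ surjects onto all of $G$. Restricting to words in the kernel does not save the accumulator argument either. After one step your accumulator is a generic element of $S^n$, not of $S^{n-1}$; appending one more $S$-letter lands in $S^{n+1}$, and no multiplication relation is available because the product need not lie in $S^n$. Enlarging $n$ does not help: whatever $n$ you fix, the same obstruction reappears at the next letter. Concretely, for $G=\Z$ with $S=\{-1,0,1\}$, the word representing $0$ as $2n$ ones followed by $2n$ minus-ones has partial sums leaving $S^n=\{-n,\dots,n\}$, so your accumulator procedure stalls.

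The fix is to use your correct step (i) in place of (ii), which is exactly what the paper does. First expand each single generator $u_i\in S^n$ into a product $v_{i1}\cdots v_{in}$ of $S$-letters inside $\tilde G$: this \emph{does} follow from the multiplication relations, because here the partial products $v_{i1}\cdots v_{ij}$ lie in $S^j\subseteq S^n$ (only $\le n$ letters are involved, not arbitrarily many). Now $u_1\cdots u_m$ equals, in $\tilde G$, a word $w$ in the generators $S\subseteq S^n$, and $w=1$ in $G$. In the free group on $S$ write $w=\prod_k g_k r_k g_k^{-1}$ with each $r_k$ a relator of length $\le n$; by your (i) each $r_k$ is already trivial in $\tilde G$, hence so is $w$, hence so is $u_1\cdots u_m$. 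In short: do not try to collapse long $S$-words directly; pass to the free group on $S$ and let the bounded presentation of $G$ do the collapsing for you.
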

\begin{proof} Let $G$ be presented by $S$ as set of generators, with
relations of length bounded by $n$. We claim that $S^n$ is a
defining generating subset of $G$.

Now let us work in the group $\tilde{G}$ presented with $S^n$ as
set of generators and subject to the relations of the form $gh=k$
for $g,h,k\in S^n$ whenever this equality holds in~$G$. Let
$u_1,\dots,u_m$ be elements in $S^n$, such that the relation
$u_1\dots u_m=1$ holds in $G$. Write $u_i=v_{i1}\dots v_{in}$ in
$G$, with all~$v_{ij}\in S$. Then it is clear from the defining
relations of $\tilde{G}$ that $u_i=v_{i1}\dots v_{in}$ holds in
$\tilde{G}$, so that $u_1\dots u_m=v_{11}\dots v_{1n}\dots
v_{m1}\dots v_{mn}$ holds in $\tilde{G}$. Since $v_{11}\dots
v_{1n}\dots v_{m1}\dots v_{mn}=1$ holds in $G$ and by the
assumption on $S$, this element can be written (in the free group
generated by $S$, hence in $\tilde{G}$) as a product of conjugates
of elements $w_k$ of length $\le n$ with respect to~$S$, such that
$w_k=1$ in $G$. Again it follows from the defining relations of
$\tilde{G}$ that the equalities $w_k=1$ hold in $\tilde{G}$.
Accordingly $u_1\dots u_m=1$ holds in $\tilde{G}$. This shows that
the natural morphism of $\tilde{G}$ onto $G$ is bijective, so that
the lemma is proved.\end{proof}


\begin{lem}
Let $G$ be a group and $S$ a generating subset. Then $G$ is
boundedly presented by $S$ if and only if $G$ is boundedly
presented by $S\cup S^{-1}$.
\end{lem}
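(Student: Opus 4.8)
The plan is to prove both directions by exhibiting one bounded presentation from the other. Since $S$ generates $G$, the subset $S \cup S^{-1}$ also generates $G$, so the question is purely about converting a presentation with relators of bounded length over one generating set into a presentation with bounded-length relators over the other. The key observation is that each element of $S^{-1} \setminus S$ is the formal inverse of an element of $S$, so the two free groups $F(S)$ and $F(S \cup S^{-1})$ are related by a simple and length-respecting comparison of words.

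For the forward direction, suppose $G$ is boundedly presented by $S$ with relators of length at most $n$. I would take $S \cup S^{-1}$ as the new generating set and use two families of relations: first, the original relators (each a word in $S \subset S \cup S^{-1}$, still of length at most $n$); and second, for every $s \in S$, the length-$2$ relation $s \cdot s^{-1} = 1$ identifying the generator ``$s^{-1}$'' of the new presentation with the formal inverse of the generator $s$. I would then argue that any relation holding in $G$ over $S \cup S^{-1}$ can be rewritten, using the length-$2$ relations, as a relation over $S$ alone, which is then a consequence of the original relators; hence the enlarged set of relations (all of length $\le \max(n,2)$) presents $G$.

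The reverse direction is analogous but slightly more delicate, because a relator of bounded length over $S \cup S^{-1}$ may genuinely involve the symbols $s^{-1}$, and I must re-express it over $S$ alone. Here I would use that in the free group on $S$ every element $s^{-1}$ is already available as the inverse letter, so replacing each occurrence of the generator ``$s^{-1}$'' by the free-group inverse of $s$ turns a length-$m$ relator over $S \cup S^{-1}$ into a length-$m$ relator over $S$. Checking that the resulting relations, together with their consequences, suffice to present $G$ over $S$ amounts to verifying that no presenting power is lost under this substitution, which follows because the substitution is a bijective, length-preserving correspondence on reduced words.

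The step I expect to be the main obstacle is the bookkeeping in the reverse direction: one must be careful that when the abstract generator written ``$s^{-1}$'' in the presentation over $S \cup S^{-1}$ is replaced by the inverse letter in $F(S)$, the defining relations are not silently assuming the identity $s^{-1} = s^{-1}$ that one is trying to establish. The clean way to handle this is to pass through the free groups explicitly and track the canonical homomorphism $F(S \cup S^{-1}) \to F(S)$ sending the symbol ``$s^{-1}$'' to $s^{-1}$, observe that it is compatible with both presentations modulo the length-$2$ relations, and conclude that the two quotient groups coincide with $G$; the length bound is preserved throughout since this map does not increase word length.
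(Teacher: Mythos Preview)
Your argument is correct and is exactly what the paper has in mind: its entire proof reads ``Immediate from the definition,'' which unpacks to precisely the free-group comparison you describe. Your worry about the reverse direction being delicate is unfounded---the surjection $F(S\cup S^{-1})\to F(S)$ sending the generator labelled ``$s^{-1}$'' to the inverse letter $s^{-1}$ is length-preserving and carries a normal generating set for one kernel onto one for the other, so both directions are equally immediate.
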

\begin{proof} Immediate from the definition.\end{proof}

\begin{lem}
Let $G$ be a group, and $S_1,S_2$ be generating subsets.

(1) If $G$ is boundedly presented by $S_1^n$ for some $n\ge 1$,
then it is boundedly presented by $S_1$

(2) If $S_1\subset S_2\subset S_1^n$ for some $n\ge 1$, and if $G$
is boundedly presented by $S_1$, then it is boundedly presented by
$S_2$.

(3) If $S_1^m\subset S_2^n\subset S_1^{m'}$ for some $m,m',n\ge
1$, then $G$ is boundedly presented by $S_1$ if and only if it is
boundedly presented by $S_2$.
\end{lem}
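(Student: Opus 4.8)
The plan is to realise each statement through elementary changes of presentation (Tietze-type arguments), the only genuinely non-formal ingredient being the control on the \emph{length} of the relators. Throughout I may assume $1\in S_i$: replacing $S_i$ by $S_i\cup\{1\}$ affects neither bounded presentability (one adjoins a generator $e$ together with the length-one relator $e$, and conversely substitutes the empty word for $1$) nor the inclusions between the various powers $S_i^k$; with this convention one has $S_i\subseteq S_i^k$ and $S_i^a\subseteq S_i^b$ for $a\le b$, which I will use freely.

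For (1), the crux, I would start from a presentation $\langle S_1^n\mid R\rangle$ of $G$ with $|r|\le L$ for all $r\in R$. For each $t\in S_1^n$ fix a word $\phi(t)$ of length $\le n$ in $S_1$ representing $t$, chosen so that $\phi(s)=s$ for $s\in S_1$. Let $\phi\colon F(S_1^n)\to F(S_1)$ be the induced substitution homomorphism and set $P=\langle S_1\mid\phi(R)\rangle$; each relator $\phi(r)$ has length $\le Ln$, so $P$ is boundedly presented by $S_1$. It remains to check $P\cong G$, which I would do by exhibiting mutually inverse homomorphisms rather than invoking Tietze moves on an infinite generating set: the map $\alpha\colon P\to G$, $s\mapsto s$, is well defined (each $\phi(r)$ equals $r=1$ in $G$) and onto, while $\beta\colon G\to P$, $t\mapsto[\phi(t)]$, is well defined (for $r\in R$ one has $[\phi(r)]=1$ in $P$); one then checks $\alpha\beta=\mathrm{id}$ and $\beta\alpha=\mathrm{id}$ on generators. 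The step deserving care is exactly this isomorphism: surjectivity via $\alpha$ is obvious, and the content is that $\beta$ witnesses injectivity, i.e. that the substituted relators already suffice to present $G$ and that nothing further collapses. (Alternatively one may first apply Lemma~\ref{lem:boun_pres_def_gen} to replace $R$ by the length-three defining relations of some power $S_1^{N}$, which makes the length bookkeeping transparent.)

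Part (2) is the easy, enlarging direction. Starting from $\langle S_1\mid R\rangle$ with $|r|\le L$, I would adjoin to the generators every $u\in S_2\setminus S_1$ together with the single defining relation $u=\psi(u)$, where $\psi(u)$ is a word of length $\le n$ in $S_1$ representing $u$ (such a word exists because $S_2\subseteq S_1^n$). Since $S_1\subseteq S_2$, the relators of $R$ are still words in the new generating set $S_2$ and keep length $\le L$, while the new relators have length $\le n+1$; adjoining generators together with their definitions is an isomorphism of the presented group, so this exhibits $G$ as boundedly presented by $S_2$. Both hypotheses are used: $S_1\subseteq S_2$ to keep $R$ available, and $S_2\subseteq S_1^n$ to bound the definitions.

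Finally (3) is a formal consequence of (1) and (2) together with inclusion bookkeeping. Assuming $G$ boundedly presented by $S_1$, choose $k$ with $mk\ge m'$; then $S_1^{m'}\subseteq S_1^{mk}\subseteq S_2^{nk}$ (using $S_1^m\subseteq S_2^n$) and $S_2^{nk}=(S_2^n)^k\subseteq (S_1^{m'})^k$ (using $S_2^n\subseteq S_1^{m'}$), so that $S_1^{m'}\subseteq S_2^{nk}\subseteq(S_1^{m'})^k$. Hence the chain: bounded presentation by $S_1$ gives, by (2) applied to $S_1\subseteq S_1^{m'}\subseteq S_1^{m'}$, bounded presentation by $S_1^{m'}$; then (2) applied to the displayed inclusions gives bounded presentation by $S_2^{nk}=(S_2)^{nk}$; then (1) gives bounded presentation by $S_2$. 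The reverse implication follows by symmetry: from $S_1^m\subseteq S_2^n\subseteq S_1^{m'}$ one derives $S_2^n\subseteq S_1^{m'}\subseteq S_2^{c}$ with $c=n\lceil m'/m\rceil$, which is the hypothesis of (3) with $S_1$ and $S_2$ interchanged, so the same chain yields bounded presentation by $S_1$. The main obstacle of the whole lemma is thus concentrated in (1), namely guaranteeing that substitution keeps relator lengths bounded while genuinely presenting $G$; once that is secured, (2) and (3) are bookkeeping.
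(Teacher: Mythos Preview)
Your argument is correct and follows essentially the same route as the paper: (1) by substituting words of length $\le n$ for generators in $S_1^n$, (2) by adjoining the elements of $S_2\setminus S_1$ together with bounded-length defining relations, and (3) by chaining (1) and (2) through suitable powers. You are more explicit than the paper in verifying the isomorphism in (1) via the pair $\alpha,\beta$ (the paper simply declares this step ``obvious''), and your inclusion bookkeeping in (3) differs cosmetically from the paper's, but the structure is the same.
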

\begin{proof} (1) is obvious: take relators with bounded length with
respect to generators in $S_1^n$, and, expressing elements of
$S_1^n$ as products of $n$ elements of $S_1$, we get defining
relators of bounded length with respect to generators in $S_1$.

(2) Consider a family $(r_i)$ of relations with bounded length
with respect to generators in $S_1$, defining a presentation of
$G$. Consider the group $\tilde{G}$ presented with $S_2$ as set of
generators, and subject to the relations on the one hand $(r_i)$
and on the other hand of the form $u_1\dots u_n=u$ for
$x_1,\dots,x_n\in S_1$ and $u\in S_2$. We claim that the natural
morphism of $\tilde{G}$ onto $G$ is injective.

Consider an element $u_1\dots u_k$ of $\tilde{G}$, with $u_i\in
S_2$, and suppose that $u_1\dots u_k=1$ holds in $G$. Write $u_i$
as a word of length $\le n$ in elements of $S_1$. Then the fact
that $u_1\dots u_k=1$ in $\tilde{G}$ follows from the relations
$r_i$.

(3) Suppose $G$ boundedly presented by $S_1$. Since $S_1\subset
S_1^m\subset S_1^m$, by (2) $G$ is boundedly presented by $S_1^m$.
Since $S_1^m\subset S_2^n\subset S_1^{mm'}$, by (2) again, $G$ is
boundedly presented by $S_2^n$. By (1), $G$ is boundedly presented
by $S_2$. Note that the hypothesis are symmetric in $S_1$ and
$S_2$, so that we do not need to prove the converse.\end{proof}

\begin{rem}
It is not true that if $G$ is boundedly presented by $S_1$ and
$S_2\supset S_1$, then $G$ is necessarily boundedly presented by
$S_2$. Indeed, let $G$ be equal to the infinite cyclic group $\Z$,
set $u_n=n!$, and let $S=\{u_n:\;n\ge 1\}$. Then $S$ generates
$G$, which is finitely presented. However, $G$ is not boundedly
presented by $S$.

Suppose the contrary, i.e. that there is a family of relations
between $u_k$'s of length $\le n-1$, with $n\ge 5$. We can add to
these relations the relations $[u_k,u_\ell]$, and write all other
ones as words of minimal length in the free \textit{abelian} group
generated by all $u_k$'s. Now consider a relation $r$ involving
$u_n$, which is not a commutator. Permuting the letters in $r$ if
necessary, we can write $r=r_1r_2r_3$, where $r_1$ involves
$u_k$'s for $k<n$, $r_2$ involves $u_n$'s, and $r_3$ involves
$u_k$'s for $k>n$. Then, in $\Z$, $|r_1|<n!$ while $r_2r_3$ is
divisible by $n!$. It follows that $r_1=r_2r_3$ in $\Z$. Now
$r_2=r_3^{-1}$ in $\Z$, but $|r_2|<(n+1)!$ and $r_3$ is divisible
by $(n+1)!$ in $\Z$. This implies $r_2=1$. Since $r$ has minimal
length, this implies that $u_n$ does not appear in $r$, a
contradiction.\label{rem:Z_not_bp_by_big_subset}\end{rem}

\begin{lem}
Let $G$ be boundedly presented by a symmetric generating subset
$S$. Let $N$ be a normal subgroup. Suppose that $N$ is generated,
as a normal subgroup, by $N\cap S^n$ for some $n$. Then $G/N$ is
boundedly presented by the image of $S$.\label{lem:bp_to_quot}
\end{lem}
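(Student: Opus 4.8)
The plan is to first write down an explicit bounded presentation of $G/N$ on the generating set $S$ itself, and then transport it to the image $\overline{S}$ of $S$ in $G/N$ by means of the alphabet-reducing surjection of free groups; the only genuinely delicate point is this last transport, since $S\to\overline{S}$ need not be injective.

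First I would fix a presentation of $G$ with $S$ as generators and relators of length $\le\ell$; writing $p\colon F(S)\to G$ for the tautological surjection from the free group on $S$, this says $\Ker p$ is the normal closure $\langle\langle R\rangle\rangle$ of a set $R$ of words of length $\le\ell$. Since $N$ is generated as a normal subgroup of $G$ by $N\cap S^n$, I would lift each element of $N\cap S^n$ to a word of length $\le n$ in $F(S)$, forming a set $W$ of such words. The preimage $p^{-1}(N)$ is normal and contains both $\Ker p$ and $W$, so it contains $\langle\langle R\cup W\rangle\rangle$; conversely $\langle\langle R\cup W\rangle\rangle$ contains $\Ker p$, hence is saturated and equals $p^{-1}(p(\langle\langle R\cup W\rangle\rangle))=p^{-1}(N)$, because $p(W)=N\cap S^n$ generates $N$ as a normal subgroup. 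Thus $p^{-1}(N)=\langle\langle R\cup W\rangle\rangle$, so $\langle S\mid R\cup W\rangle$ presents $G/N$ with relators of length $\le\max(\ell,n)$, but on the alphabet $S$ rather than on $\overline{S}$.

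The main point is the change of alphabet. To handle it I would introduce the surjection $Q\colon F(S)\to F(\overline{S})$ induced by $s\mapsto\overline{s}$, together with the canonical surjections $\pi\colon F(S)\to G/N$ and $\overline{\pi}\colon F(\overline{S})\to G/N$, which satisfy $\overline{\pi}\circ Q=\pi$. From this identity one reads off $Q^{-1}(\Ker\overline{\pi})=\Ker\pi$, and since $Q$ is surjective this yields $\Ker\overline{\pi}=Q(\Ker\pi)$. As $Q$ is onto and $\Ker\pi=p^{-1}(N)=\langle\langle R\cup W\rangle\rangle$, a surjection of free groups carries a normal closure to the normal closure of the image, so $\Ker\overline{\pi}=\langle\langle Q(R)\cup Q(W)\rangle\rangle$.

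Finally I would observe that $Q$ sends a word of length $k$ to a word of length at most $k$, so $Q(R)$ and $Q(W)$ consist of relators of length $\le\max(\ell,n)$. Hence $\langle\overline{S}\mid Q(R)\cup Q(W)\rangle$ presents $G/N$ with bounded relator length, which is exactly the assertion that $G/N$ is boundedly presented by $\overline{S}$. The expected obstacle is the bookkeeping forced by the non-injectivity of $S\to\overline{S}$, which forbids simply re-reading the $S$-presentation on $\overline{S}$; once it is phrased through the surjection $Q$ and the relation $\overline{\pi}\circ Q=\pi$, the argument reduces to the elementary facts that surjections of free groups preserve normal closures and do not increase word length.
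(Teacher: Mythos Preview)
Your argument is correct. The paper actually states this lemma without proof, so there is nothing to compare against; your write-up fills in exactly the standard argument one would expect. The two substantive steps --- that $p^{-1}(N)=\langle\langle R\cup W\rangle\rangle$ because $\langle\langle R\cup W\rangle\rangle$ contains $\Ker p$ and surjects onto $N$, and that the alphabet change via the surjection $Q\colon F(S)\to F(\overline{S})$ carries this normal closure to $\langle\langle Q(R)\cup Q(W)\rangle\rangle$ without increasing relator length --- are both handled cleanly. The only remark I would add is that the ``delicate point'' you flag is not really delicate: one could equally well observe that the relations $s=s'$ for $s,s'\in S$ with $\overline{s}=\overline{s'}$ have length $2$ and already follow from $R\cup W$ (since $s^{-1}s'\in N\cap S^2\subset N\cap S^{\max(2,n)}$), so the passage from $S$ to $\overline{S}$ is automatic once one has the $S$-presentation. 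But your treatment via $Q$ is perfectly fine and arguably more transparent.
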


\begin{lem}
Let $G$ be a group, and $N$ a normal subgroup; denote $p:G\to G/N$
the natural projection. Suppose that $N$ is boundedly presented by
a subset $T$. Let $T'\subset G$ be such that $p(T')=T$. Then $G$
is boundedly presented by $S=N\cup T'$.\label{lem:bp_from_quot}
\end{lem}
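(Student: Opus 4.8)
The plan is to present $G$ by taking \emph{all} of $N$ as generators (so that the structure of $N$ is recorded for free by its multiplication table, each such relation having length $\le 3$) and then adjoining a bounded set of relations that encodes the normality of $N$ together with a bounded presentation of the quotient. Throughout I read the hypothesis in the only way consistent with $p(T')=T$: the subset $T$ is a generating set of $G/N$ and $\langle T\mid R\rangle$ is a bounded presentation of $G/N$, say with $|r|\le M$ for every $r\in R$. A single reduction is useful first. Choose $T_0'\subseteq T'$ on which $p$ restricts to a bijection onto $T$ (pick one preimage per element of $T$). Then $N\cup T_0'\subseteq S\subseteq (N\cup T_0')^2$, since $1\in N$ gives $N\subseteq (N\cup T_0')^2$ and each $t'\in T'$ factors as $\bigl(t'(t_0')^{-1}\bigr)t_0'$ with $t'(t_0')^{-1}\in N$ and $t_0'\in T_0'$. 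By part~(2) of the preceding lemma on changing generating sets, it therefore suffices to prove that $G$ is boundedly presented by $N\cup T_0'$; so I may assume $p$ maps $T'$ bijectively onto $T$. Note that, since a presentation allows inverses of generators inside relators, I need not make $S$ symmetric.

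Next I introduce the candidate presentation. Let $\tilde G$ be the group with generating set $S=N\cup T'$ and the following relators, all of length bounded by $\max(4,M+1)$: \textbf{(a)} the multiplication table $gh=k$ for all $g,h,k\in N$ with $gh=k$ in $N$; \textbf{(b)} the conjugation relations $t'g(t')^{-1}=h$ for $t'\in T'$ and $g\in N$, where $h=t'g(t')^{-1}\in N$; and \textbf{(c)} for each $r=t_1\cdots t_\ell\in R$, the lifted relation $\hat r=n_r$, where $\hat r$ is the word $t_1\cdots t_\ell$ with each $t_i$ replaced by the corresponding element of $T'\cup(T')^{-1}$, and $n_r\in N$ is the element it represents in $G$ (well defined because $p(\hat r)=r=1$ in $G/N$). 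All these relations hold in $G$, and $S$ generates $G$ (as $p(T')=T$ generates $G/N$ and $N\subseteq\langle S\rangle$), so there is a natural surjection $\pi\colon\tilde G\to G$ fixing generators. It remains to prove $\pi$ injective.

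To analyse $\tilde G$, let $\tilde N\le\tilde G$ be the subgroup generated by the images of the elements of $N$. Relations~(a) ensure that $g\mapsto g$ defines a homomorphism $\iota\colon N\to\tilde N$, which is surjective by construction; since $\pi\circ\iota=\mathrm{id}_N$, the map $\iota$ is an isomorphism, so $\pi$ restricts to an isomorphism $\tilde N\xrightarrow{\sim}N$. Relations~(b), read for all $g\in N$, say that conjugation by $t'$ carries the generating set $N$ of $\tilde N$ bijectively onto itself; hence each $t'$ (and its inverse) normalizes $\tilde N$, and $\tilde N\trianglelefteq\tilde G$. Passing to $\tilde G/\tilde N$ kills the generators from $N$, trivializes (a) and (b), and turns each relation~(c) into $\overline{\hat r}=1$; under the bijection $\overline{t'}\leftrightarrow p(t')$ this is exactly the presentation $\langle T\mid R\rangle$, so $\pi$ induces an isomorphism $\bar\pi\colon\tilde G/\tilde N\xrightarrow{\sim}G/N$.

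Finally I conclude by a short diagram chase (the short five lemma). If $w\in\ker\pi$, its image in $\tilde G/\tilde N$ lies in $\ker\bar\pi=\{1\}$, so $w\in\tilde N$; but $\pi|_{\tilde N}$ is injective, whence $w=1$. Thus $\pi$ is an isomorphism, and the bounded presentation constructed above is a presentation of $G$, proving that $G$ is boundedly presented by $N\cup T'$. The only real content lies in step~(c) and the bookkeeping around it: one must be sure that families (a), (b), (c) are \emph{simultaneously} enough to force both $\tilde N\cong N$ and $\tilde G/\tilde N\cong G/N$, while each relator stays of bounded length; granting this, the matching of the two extensions forces $\pi$ to be bijective, and everything else is routine.
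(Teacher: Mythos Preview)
Your proof is correct and follows essentially the same approach as the paper: one builds $\tilde G$ from the generating set $N\cup T'$ with three families of bounded-length relations --- the multiplication table of $N$, the conjugation relations $t'g(t')^{-1}=h$, and the lifted relators $\hat r=n_r$ of $G/N$ --- and then checks that the canonical surjection $\tilde G\to G$ is injective. Your packaging of the injectivity via the short five lemma (showing $\tilde N\cong N$ and $\tilde G/\tilde N\cong G/N$ separately) is a slightly cleaner rephrasing of the paper's direct element chase, and your preliminary reduction to a $T'$ bijecting onto $T$ is a minor bookkeeping convenience the paper omits, but neither constitutes a genuinely different route. You also correctly read the (garbled) hypothesis as ``$G/N$ is boundedly presented by $T$'', which is indeed how the paper's own proof proceeds.
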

\begin{proof} Let $(r_i)$ be defining relations of bounded length for
$(G/N,T)$. 
 Let $\tilde{G}$ be defined by $T\cup S$ as set of
generators, and subject to the following relations:
\begin{itemize}
    \item[(i)] all possible relations obtained by lifting relations
        $r_i(t_1,\dots,t_n)=1$, giving relations of the form
        $r_i(\tilde{t}_1,\dots,\tilde{t}_n)=u$ (with $u\in N$),
    \item[(ii)] relations of the form $tut^{-1}=v$ for $t\in T'\cup
    T'^{-1}$ and $u,v\in N$,
    \item[(iii)] relations of bounded length defining $(N,T)$.
\end{itemize}
Let us show that $p:\tilde{G}\to G$ is bijective. Note that it
follows from the relations that $N$ can considered as a normal
subgroup of $\tilde{G}$. Let $u_1\dots u_n$ belong to $\tilde{G}$,
such that $u_1\dots u_n=1$ in $G$. Then $u_1\dots u_n=1$ in $G/N$,
so that we can write $u_1\dots u_n=\prod(g_km_kg_k^{-1})$, where
$m_k$'s are elements among $r_i$'s. Write $m_k=\rho_kv_k$, where
$\rho_k$ is a relation of $G$ from (i) and $v_k\in N$. Then
$\prod(g_km_kg_k^{-1})=\prod(g_k\rho_kg_k^{-1})v$ for some $v\in
N$ (this holds in the group only subject to the relations (ii)).
Thus in $\tilde{G}$, $u_1\dots u_n=v$. Hence $v$ is in the kernel
of $p$, but $p$ is injective in restriction to $N$. Accordingly
$v=1$ and $u_1\dots u_n=1$ in $\tilde{G}$.\end{proof}

\begin{lem}
Consider an extension $1\to N\to G\to Q\to 1$. Let $T$ resp. $N$
boundedly present $W$ resp. $Q$. Let $W'$ be a subset of $G$ whose
projection into $Q$ contains $W$. Suppose in addition that $G$ has
a Hausdorff topological group structure such that both $T$ and
$W'$ are compact and $T$ has non-empty interior in $N$. Then $G$
is boundedly presented by $S=T\cup
W'$.\label{lem:ext_cpt_pres_top}
\end{lem}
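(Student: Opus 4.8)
The plan is to run the proof of Lemma~\ref{lem:bp_from_quot} in the topological category, with the compact set $T$ now playing the role that the whole subgroup $N$ played there, the extra topological hypotheses being exactly what keeps every relation of bounded length. First some harmless reductions. Enlarging $T$ to $T\cup T^{-1}\cup\{1\}$ and $W'$ to $W'\cup W'^{-1}\cup\{1\}$ preserves compactness, the non-empty interior of $T$ in $N$, and both bounded presentations (for the last point invoke the lemma comparing bounded presentation by $S$ and by $S\cup S^{-1}$), so I may assume $T$ and $W'$ are symmetric and contain $1$. Then $S=T\cup W'$ is a union of two compact sets, hence compact, and it generates $G$: any $g\in G$ has $p(g)\in Q=\langle W\rangle\subseteq\langle p(W')\rangle$, so $g\in\langle W'\rangle\cdot N=\langle W'\rangle\langle T\rangle\subseteq\langle S\rangle$.

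Next I record two topological facts, both consequences of $T$ being compact with non-empty interior in $N=\langle T\rangle$. (a) $N$ is closed in $G$: if $O\subseteq G$ is open with $\emptyset\neq O\cap N\subseteq T$, then for $x\in O\cap\overline{N}$ any net in $N$ converging to $x$ eventually lies in $O\cap N\subseteq T$, so $x\in T\subseteq N$; thus $N$ contains the non-empty open subset $O\cap\overline N$ of $\overline N$, hence is open, hence closed in $\overline N$, i.e. $N=\overline N$. In particular $Q$ is Hausdorff and $C\cap N$ is compact for every compact $C\subseteq G$. (b) Every compact subset $K\subseteq N$ lies in some $T^{n}$: since $\mathrm{int}(T)\neq\emptyset$ and $1\in T$, the set $T^{2}$ contains an open neighbourhood $\Omega$ of $1$, whence $T^{n+2}\supseteq T^{n}\Omega\supseteq T^{n}$ with $T^{n}\Omega$ open, so $\{\mathrm{int}(T^{m})\}_m$ is an increasing open cover of $N=\bigcup_m T^m$ and compactness of $K$ finishes the argument.

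With this in hand I build $\tilde G$ as in Lemma~\ref{lem:bp_from_quot}: generators $S$, and relations (i) the lifts $r_i(\tilde w_1,\dots,\tilde w_k)=u$ of bounded-length defining relators $r_i$ of $(Q,W)$ (choosing lifts $\tilde w_j\in W'$, legitimate since $p(W')\supseteq W$); (ii) the conjugation relations $w't\,w'^{-1}=v$ for $w'\in W'$ and $t\in T$; and (iii) a bounded presentation of $(N,T)$, available by hypothesis. The point is that (i) and (ii) can be taken of bounded length. For (ii), the conjugates $W'TW'^{-1}$ form the continuous image of the compact set $W'\times T\times W'$, hence a compact subset of $N$, so each value $v$ lies in a fixed $T^{n_1}$ by (b). For (i), each relator has length $\le\ell$, so its lift evaluates in $G$ to an element of the compact set $(W')^{\le\ell}$; being a lift of a relator it lies in $N$, hence in the compact set $(W')^{\le\ell}\cap N$ (using (a)), hence in a fixed $T^{n_0}$. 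Thus all relations have length bounded in terms of $\ell$, $n_0$, $n_1$ and the bound coming from (iii).

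It remains to see that $p\colon\tilde G\to G$ is an isomorphism, which proceeds exactly as in Lemma~\ref{lem:bp_from_quot}: surjectivity is clear, and for injectivity one uses (ii) letter by letter to push every $T$-syllable of a word past the $W'$-syllables, writing any element of $\tilde G$ as $\omega\nu$ with $\omega$ a word in $W'$ and $\nu$ a word in $T$; if this is trivial in $G$ then $\omega$ projects to a relator-consequence in $Q$, which relations (i) convert into an element of $N$, and the resulting word in $T$, being trivial in $N$, is trivial in $\tilde G$ by (iii). The main obstacle is precisely the uniform length bound of the previous paragraph: everything hinges on the two facts in (a) and (b), and these in turn hinge on $T$ having non-empty interior in $N$ --- without it neither the closedness of $N$ nor the exhaustion $K\subseteq T^{n}$ need hold, and the lifted relators and conjugates could have unbounded $T$-length.
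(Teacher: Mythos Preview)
Your proof is correct and follows essentially the same strategy as the paper's: build $\tilde G$ from the lifted quotient relators, the conjugation relations, and the bounded presentation of $(N,T)$, then use compactness to force the lifted words to have bounded $T$-length. Your explicit verifications that $N$ is closed in $G$ and that every compact subset of $N$ lies in some $T^{n}$ (your facts (a) and (b)) fill in details the paper leaves implicit, and your inclusion of the relation family (iii) corrects what appears to be an accidental omission in the paper's displayed list of relations (the paper invokes ``relations (iii)'' at the end without having stated them).
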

\begin{proof} Let $(r_i)$ be defining relations of bounded length for
$(Q,W)$. Let $\tilde{G}$ be defined by $T\cup S$ as set of
generators, and subject to the following relations:
\begin{itemize}
    \item[(i)] relations obtained by lifting relations
        $r_i(t_1,\dots,t_n)=1$, giving relations of the form
        $r_i(\tilde{t}_1,\dots,\tilde{t}_n)=u$ (with $u\in N$ expressed
        as a word in letters in $T\cup T^{-1}$),
    \item[(ii)] relations of the form $wuw^{-1}=v$ for $w\in W'\cup
    W'^{-1}$, $u\in T$, $v\in N$.
\end{itemize}
Observe that the relations in (i) are of bounded length: if $m$ is
a bound to the length of $r_i$'s, then the elements $u$'s
appearing in (i) belong to the intersection of $N$ with the
$m$-ball of $G$ with respect to $W'$, which is compact, hence is
contained in the $m'$-ball of $N$ with respect to $T$ for some
$m'$. Similarly, the $v$ appearing in (ii) has bounded length, so
that relators in (ii) also have bounded length.

Let us show that $p:\tilde{G}\to G$ is bijective. As in the
preceding proof, it follows from the relations that the subgroup
$\tilde{N}$ of $\tilde{G}$ generated by $T$ is normal in
$\tilde{G}$. Let $u_1\dots u_n$ belong to $\tilde{G}$, such that
$u_1\dots u_n=1$ in $G$. As in the preceding proof, there exists
$v\in\tilde{N}$ such that $u_1\dots u_n=v$ in $\tilde{G}$. Now it
follows from the relations (iii) that $p$ is injective in
restriction to $\tilde{N}$. Accordingly $v=1$ and $u_1\dots u_n=1$
in $\tilde{G}$.\end{proof}

\section{Compactly presented groups}\label{cpg}

\begin{defn}
Let $G$ be a topological group. We say that $G$ is compactly
presented if $G$ is Hausdorff and there is a compact subset
$S\subset G$ such that the (abstract) group $G$ is boundedly
presented by $S$.
\end{defn}

The following is a standard application of compactness and of the
Baire Theorem.
\begin{lem}
Let $G$ be a locally compact group, and $S$ a compact generating
subset. Let $K\subset G$ be compact. Then $K\subset S^n$ for
large~$n$.\qed
\end{lem}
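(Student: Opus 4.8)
The plan is to use that $G$, being locally compact and Hausdorff, is a Baire space, together with the fact that $G$ is exhausted by the sets $S^n$. A preliminary remark: the statement tacitly requires $\bigcup_{n\ge 1}S^n=G$, which forces $S$ to be symmetric, so I would first normalize by assuming $1\in S=S^{-1}$; replacing $S$ by the still-compact, still-generating set $S\cup S^{-1}\cup\{1\}$ achieves this, and under this convention one has the monotonicity $S^n\subset S^{n+1}$ together with $\bigcup_{n\ge 1}S^n=G$.

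Next I would verify that each $S^n$ is closed. In a Hausdorff topological group the multiplication map is continuous, so $S^n$, being the image of the compact set $S\times\dots\times S$, is compact, hence closed. Thus $G=\bigcup_{n\ge 1}S^n$ exhibits the Baire space $G$ as a countable union of closed sets, and Baire's theorem guarantees that some $S^{n_0}$ has non-empty interior.

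The key step is then to manufacture an identity neighbourhood sitting inside a fixed power of $S$. Choosing an interior point $v\in S^{n_0}$, the translate $U:=v^{-1}\,\operatorname{int}(S^{n_0})$ is an open neighbourhood of $1$; since $S$ is symmetric we have $v^{-1}\in S^{n_0}$, whence $U\subset S^{n_0}S^{n_0}=S^{2n_0}$. Now I would invoke the compactness of $K$: for each $k\in K$ the set $kU$ is an open neighbourhood of $k$, and writing $k\in S^{m_k}$ gives $kU\subset S^{m_k}U\subset S^{m_k+2n_0}$. The family $\{kU:k\in K\}$ covers $K$, so finitely many $k_1U,\dots,k_rU$ already cover it; setting $n=\max_i(m_{k_i}+2n_0)$ and using the nesting $S^j\subset S^{j+1}$ yields $K\subset S^n$, as desired.

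The one genuine subtlety lies in the Baire step: one must be certain the pieces $S^n$ are \emph{closed}, which is exactly where Hausdorffness and the compactness of finite products are used, so that Baire's theorem is applicable and forces one power to have interior. Everything afterward is the routine open-cover compactness argument. I expect the normalization of $S$ to be symmetric and to contain the identity to be the point most easily overlooked, since without it neither $\bigcup_n S^n=G$ nor the monotonicity of the $S^n$ need hold, and the conclusion can fail outright.
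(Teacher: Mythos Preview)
Your proof is correct and is precisely the ``standard application of compactness and of the Baire Theorem'' that the paper invokes without details: the paper gives no proof beyond that phrase, and your argument---closedness of the $S^n$, Baire to obtain interior, translation to an identity neighbourhood, then a finite open cover of $K$---is exactly the intended one. Your remark about normalizing $S$ to be symmetric and to contain $1$ is a genuine and necessary caveat, since with the paper's convention $S^n=\{s_1\cdots s_n:s_i\in S\}$ the conclusion can indeed fail for a non-symmetric $S$; the paper tacitly assumes this (or works only with symmetric generating sets in the applications).
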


This has the following immediate consequence.

\begin{lem}If a locally compact group $G$ is compactly presented,
then it is boundedly presented by all its compact generating
subsets.\qed
\end{lem}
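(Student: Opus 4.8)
The plan is to reduce the statement to part (3) of the three-part Lemma on generating subsets, feeding that lemma the containments produced by the immediately preceding compactness Lemma.

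First I would fix a compact generating subset $S$ witnessing the compact presentation, so that $G$ is boundedly presented by $S$. Let $T$ be an arbitrary compact generating subset; the goal is to show that $G$ is boundedly presented by $T$. The key observation is that $S$ and $T$ are both \emph{compact generating} subsets of the locally compact group $G$, so the preceding Lemma applies with either of them in the role of the generating set.

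Next I would apply that Lemma twice. Applied to the compact generating subset $T$ and to the compact set $K=S$, it yields $S\subset T^n$ for some $n\ge 1$. Applied to the compact generating subset $S$ and to the compact set $K=T$, it yields $T\subset S^a$ for some $a\ge 1$, whence $T^n\subset S^{na}$. Chaining these gives
\[
S\subset T^n\subset S^{na}.
\]
This is precisely the hypothesis $S_1^m\subset S_2^n\subset S_1^{m'}$ of part (3), taking $S_1=S$, $S_2=T$, $m=1$, and $m'=na$. Part (3) then asserts that $G$ is boundedly presented by $S$ if and only if it is boundedly presented by $T$; since the former holds by the choice of $S$, the latter follows, which is the claim.

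I expect no genuine obstacle here: all of the substance is carried by the earlier lemmas, and the argument is a two-line combination of them. The only point demanding a touch of care is that the compactness Lemma must be invoked in \emph{both} directions, which requires $T$ to be a bona fide compact generating set rather than merely a compact subset; this is exactly what the hypothesis guarantees, and it is what makes the symmetric conclusion of part (3) available.
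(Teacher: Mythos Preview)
Your argument is correct and is precisely the intended one: the paper states this lemma as an ``immediate consequence'' of the compactness lemma (with a \qed and no written proof), and the obvious way to unpack that is exactly your two applications of the compactness lemma followed by part~(3) of the three-part lemma.
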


\begin{rem}
This is not true without the assumption that $G$ is locally
compact. Indeed, endow $\Z$ with the $p$-adic topology
$\mathcal{T}_p$. Set $u_n=n!$ and $u_\infty=0$. Then
$\{u_n\,:\;n\le\infty\}$ is a compact subset of
$(\Z,\mathcal{T}_p)$. But we have shown in Remark
\ref{rem:Z_not_bp_by_big_subset} that $\Z$ is boundedly presented
by $\{u_n\,:\;n\le\infty\}$, while it is boundedly presented by
$\{1\}$.
\end{rem}

\begin{prop}
Let $G$ be a locally compact group which is an extension of
compactly presented groups. Then $G$ is compactly presented.\label{ext}
\end{prop}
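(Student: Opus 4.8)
The plan is to exhibit $G$ as an extension $1\to N\to G\to Q\to 1$ with $N$ and $Q$ compactly presented, and then to invoke Lemma~\ref{lem:ext_cpt_pres_top}, whose conclusion is precisely that $G$ is boundedly presented by a set of the form $T\cup W'$. Since such a set will be compact, this gives exactly what we want. The entire task therefore reduces to manufacturing data $(T,W,W')$ fulfilling the hypotheses of that lemma; the two non-automatic requirements are that $T$ be a \emph{compact} set boundedly presenting $N$ \emph{with non-empty interior in} $N$, and that $W'$ be a compact subset of $G$ projecting onto a compact presenting set $W$ of $Q$.

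First I would record that $N$ is closed in $G$: as $Q$ is compactly presented it is Hausdorff, so $\{1\}$ is closed in $Q$ and $N$, being the preimage of $1$ under the continuous projection, is closed; as a closed subgroup of the locally compact group $G$ it is itself locally compact. Hence $N$ admits a compact neighbourhood $V$ of the identity. Starting from any compact set $T_0$ boundedly presenting $N$ (which exists since $N$ is compactly presented), I put $T=T_0\cup V$. This $T$ is compact, generates $N$, and has non-empty interior in $N$; moreover, being a compactly presented locally compact group, $N$ is boundedly presented by every one of its compact generating subsets, in particular by $T$.

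For $Q$ I simply take a compact set $W$ boundedly presenting it. To construct $W'$ I use that the quotient projection $p:G\to Q$ is open. For each $q\in W$ I choose a preimage together with a compact neighbourhood of it; its image under $p$ is a neighbourhood of $q$, so finitely many such images cover the compact set $W$. Intersecting the (compact) union of the corresponding neighbourhoods with the closed set $p^{-1}(W)$ yields a compact $W'\subset G$ with $p(W')=W$. With $T$, $W$, and $W'$ now in hand, all hypotheses of Lemma~\ref{lem:ext_cpt_pres_top} hold, and it yields that $G$ is boundedly presented by the compact set $S=T\cup W'$; as $G$ is Hausdorff, this says precisely that $G$ is compactly presented.

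The only genuinely delicate point I anticipate is the non-empty-interior condition on $T$. It is what forces the use of local compactness of $N$ (to produce $V$) together with the fact that a compactly presented locally compact group is boundedly presented by \emph{arbitrary} compact generating sets, rather than merely by the single presenting set furnished by the definition. Enlarging a presenting set is not free in general, as Remark~\ref{rem:Z_not_bp_by_big_subset} shows, so it is essential that local compactness be available to legitimize replacing $T_0$ by $T_0\cup V$.
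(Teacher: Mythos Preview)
Your proof is correct and follows the same approach as the paper: both reduce immediately to Lemma~\ref{lem:ext_cpt_pres_top}, and the only real work is arranging its hypotheses. The paper's proof is terser---it simply asserts the compact-lifting property for $W'$ and does not spell out how to obtain a $T$ with non-empty interior---whereas you supply explicit constructions for both ingredients, correctly noting that the enlargement $T_0\leadsto T_0\cup V$ requires the lemma that a compactly presented locally compact group is boundedly presented by \emph{every} compact generating set.
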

\begin{proof} This follows from Lemma \ref{lem:ext_cpt_pres_top} using the
fact that, if $G$ is a locally compact group and $N$ a closed
normal subgroup, and if $p:G\to G/N$ denotes the projection, then
for every compact subset $W$ of $G/N$ there exists a compact
subset $W'$ of $G$ such that $p(W')=W$ (this is not true for
general topological groups).\end{proof}


\begin{lem}
Let $G$ be a topological group. Let $U$ be an open, symmetric
generating subset containing~$1$. Consider the group $\tilde{G}$
presented with $\tilde{U}=U$ as set of generators, and with
relations $gh=k$ for $g,h,k\in U$ whenever this relation holds in
$G$. Then $\tilde{G}$ has a unique topology such that the natural
projection $p:\tilde{G}\to G$ is continuous and $\tilde{U}$ is a
neighbourhood of $1$ in $\tilde{G}$ such that $p$ induces a
homeomorphism $\tilde{U}\to U$. Moreover, $p$ is open with
discrete kernel, and $\tilde{G}$ is Hausdorff whenever $G$
is.\label{lem:covering}
\end{lem}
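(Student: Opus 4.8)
The plan is to build the topology on $\tilde{G}$ by prescribing a neighbourhood basis at the identity and invoking the standard criterion under which a filter base on a group is the neighbourhood filter of the identity of a (unique) group topology. Before doing so I record three facts inside $\tilde{G}$. First, the canonical map $U\to\tilde{U}$ is a bijection, since composing it with $p$ gives the inclusion $U\hookrightarrow G$; I write $u\mapsto\tilde{u}$ for this map, so $p(\tilde{u})=u$. Second, $\tilde{1}=1_{\tilde{G}}$: the relation $1\cdot 1=1$ (legitimate as $1\in U$) forces $\tilde{1}$ to be idempotent. Third, since $U$ is symmetric we have $u^{-1}\in U$, and the relation $u\,u^{-1}=1$ gives $\widetilde{u^{-1}}=\tilde{u}^{-1}$. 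Now let $\mathcal N$ be the open neighbourhoods $V$ of $1$ in $G$ with $V\subseteq U$, and put $\tilde{V}=\{\tilde{v}:v\in V\}\subseteq\tilde{U}$; the candidate basis at $1$ is $\{\tilde{V}:V\in\mathcal N\}$, which is a filter base because $\tilde{V}_1\cap\tilde{V}_2=\widetilde{V_1\cap V_2}$.

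The crux is to verify the axioms making $\{\tilde{V}\}$ a neighbourhood basis for a group topology, and the single delicate point is conjugation invariance: given $V\in\mathcal N$ and $\tilde{g}\in\tilde{G}$, I need $W\in\mathcal N$ with $\tilde{g}\tilde{W}\tilde{g}^{-1}\subseteq\tilde{V}$. The set of $\tilde{g}$ for which this holds (for every $V$) is closed under products, and $\tilde{G}$ is generated by the $\tilde{u}$, so it suffices to take $\tilde{g}=\tilde{u}$. Here the defining relations do the work: by continuity of the operations of $G$ I choose an open $W\ni 1$ with $W\subseteq U$, $uW\subseteq U$ and $uWu^{-1}\subseteq V$; then for $w\in W$ the elements $uw$, $u^{-1}$ and $uwu^{-1}$ all lie in $U$, so the relations $\tilde{u}\tilde{w}=\widetilde{uw}$ and $\widetilde{uw}\,\tilde{u}^{-1}=\widetilde{uwu^{-1}}$ hold, whence $\tilde{u}\tilde{w}\tilde{u}^{-1}=\widetilde{uwu^{-1}}\in\tilde{V}$. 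The two remaining axioms are variations on the same idea: choosing $W$ with $WW\subseteq V$ gives $\tilde{w}_1\tilde{w}_2=\widetilde{w_1w_2}\in\tilde{V}$, and choosing $W$ symmetric gives $\tilde{W}^{-1}=\widetilde{W^{-1}}=\tilde{W}\subseteq\tilde{V}$. This yields the unique group topology with $\{\tilde{V}\}$ as basis at $1$.

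The stated properties then follow. Since $p(\tilde{V})=V$, the map $p$ is continuous at $1$, hence continuous. The set $\tilde{U}$ is open: for $u\in U$ pick $W$ with $uW\subseteq U$, so $\tilde{u}\tilde{W}\subseteq\tilde{U}$ and $\tilde{U}$ is a neighbourhood of each of its points. A direct computation gives $\tilde{u}\tilde{W}\cap\tilde{U}=\widetilde{uW\cap U}$, so $p$ carries the neighbourhood basis $\{\tilde{u}\tilde{W}\cap\tilde{U}\}$ of $\tilde{u}$ in $\tilde{U}$ onto the basis $\{uW\cap U\}$ of $u$ in $U$; thus $p|_{\tilde{U}}$ is a continuous open bijection, i.e. a homeomorphism. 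Openness of $p$ on $\tilde{G}$ follows because $p(\tilde{g}\tilde{V})=p(\tilde{g})V$ is open and such sets generate the topology, and $\ker p$ is discrete since $\ker p\cap\tilde{U}=\{\tilde{1}\}=\{1\}$. Finally, if $G$ is Hausdorff then any element of $\bigcap_{V\in\mathcal N}\tilde{V}$ lies in $\tilde{U}$, hence equals $\tilde{u}$ with $u\in\bigcap_{V\in\mathcal N}V=\{1\}$; so $\overline{\{1\}}=\{1\}$, and a group topology is Hausdorff as soon as $\{1\}$ is closed.

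For uniqueness, any group topology $\tau$ in which $\tilde{U}$ is a neighbourhood of $1$ and $p|_{\tilde{U}}$ is a homeomorphism onto $U$ has the $\tilde{V}$ as a neighbourhood basis at $1$: each $\tilde{V}$ is $\tau$-open in $\tilde{U}$ and hence a $\tau$-neighbourhood of $1$, while any $\tau$-neighbourhood $O$ of $1$ satisfies $p(O\cap\tilde{U})\supseteq V$ for some $V\in\mathcal N$, so $O\supseteq\tilde{V}$; since a group topology is determined by its neighbourhood filter at the identity, $\tau$ coincides with the constructed topology. The main obstacle is precisely the conjugation axiom, and everything hinges on reducing it to the generators $\tilde{u}$ and then using the relations $gh=k$ to transport the identity $uwu^{-1}=v$ from $G$ into $\tilde{G}$; the rest is routine topological-group bookkeeping resting on continuity of the operations of $G$.
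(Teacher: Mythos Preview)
Your proof is correct and follows essentially the same approach as the paper: both build the topology on $\tilde{G}$ by transporting the neighbourhood basis of $1$ in $G$ through the bijection $U\to\tilde{U}$, and in both the only nontrivial step is the conjugation axiom, handled by reducing to generators $\tilde{u}$ and invoking the relations $gh=k$ exactly as you do. The sole difference is packaging---you invoke the standard filter-base criterion for group topologies while the paper verifies the axioms directly via nets---but the substance is the same.
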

\begin{proof} Recall that a group topology is characterized by nets
converging to 1. It is easily seen that for a topology on
$\tilde{G}$ satisfying the conditions of the lemma, a net $(x_i)$
converges to 1 if and only if eventually $x_i\in\tilde{U}$ and
$p(x_i)\to 1$. This proves the uniqueness.


Now let us construct this topology. We define
$\Omega\subset\tilde{G}$ to be open if, for every $x\in\Omega$,
the set $p(x^{-1}\Omega\cap\tilde{U})$ contains a neighbourhood of
$1$ in~$G$. This defines a topology: trivially, $\emptyset$ and
$\tilde{G}$ are open and the class of open sets is stable under
unions. Let $\Omega_1$ and $\Omega_2$ be open. Then, for every
$x\in\Omega_1\cap\Omega_2$,
$p(x^{-1}\Omega_1\cap\Omega_1\cap\tilde{U})=p(x^{-1}\Omega_1\cap\tilde{U})\cap
p(x^{-1}\Omega_2\cap\tilde{U})$ (due to the injectivity of $p$ on
$\tilde{U}$), so that $\Omega_1\cap\Omega_2$ is open.

Observe that, for such a topology, $\tilde{U}$ is open, and
$x_i\to x$ if and only if $x^{-1}x_i\to 1$, while $x_i\to 1$ if
and only if it satisfied the condition introduced at the beginning
of the proof. In particular, $x_i\to 1$ if and only if
$x_i^{-1}\to 1$, and, if  $x_i\to 1$ and $y_i\to 1$, then
$x_iy_i\to 1$. It remains to check that the topology is a group
topology.

We first claim that, for every net $(x_i)$ in $\tilde{G}$, if
$x_i\to 1$ and $g\in\tilde{G}$, then $y_i=gx_ig^{-1}\to 1$. This
is proved by induction on the length of $g$ as a product of
elements of $\tilde{U}$. This immediately reduces to the case when
$g\in\tilde{U}$. In this case, for large $i$, $p(gx_i)$ and
$p(gx_ig^{-1})=p(y_i)$ belong to $U$. Moreover, $gx_i\to g$, hence
eventually belongs to $\tilde{U}$. Since the relation
$(gx_i)g^{-1}=\tilde{y}_i$ (where $\tilde{y}_i$ denotes the
preimage of $p(y_i)$ in $\tilde{U}$) holds in $G$ and
$gx_i\in\tilde{U}$ for large $i$, by the definition of
$\tilde{G}$, we obtain that $(gx_i)g^{-1}=\tilde{y}_i$ in
$\tilde{G}$, so that $y_i=\tilde{y}_i\in\tilde{U}$ for large~$i$.
Now $p(y_i)\to 1$, and thus $y_i\to 1$.

Now let us prove that if $x_i\to x$, then $x_i^{-1}\to x^{-1}$.
Indeed, $x^{-1}x_i\to 1$, and by conjugating by $x$, we obtain
$x_ix^{-1}\to 1$. As observed above, we can take the inverse, so
that $xx_i^{-1}\to 1$, i.e. $x_i^{-1}\to x^{-1}$.

Now let us prove that if $x_i\to x$ and $y_i\to y$, then
$x_iy_i\to xy$. Indeed,
$(xy)^{-1}x_iy_i=y^{-1}x^{-1}x_iy_i=y^{-1}(x^{-1}x_iy_iy^{-1})y$.
By the combining the observations above, $y_iy^{-1}\to 1$, hence
$x^{-1}x_iy_iy^{-1}\to 1$, hence $y^{-1}(x^{-1}x_iy_iy^{-1})y\to
1$. So this is a group topology.

It is immediate that $p$ is continuous. Since
$\tilde{U}\cap\text{Ker}(p)=\{1\}$, the kernel is discrete.
Moreover, it is immediate from the definition that $p$ induces a
homeomorphism of $\tilde{U}$ onto $U$. Since these are
neighbourhoods of 1 in $\tilde{G}$ and $G$, this implies that $p$
is open. If $G$ is Hausdorff, then it is immediate from the
definition that $\tilde{G}-\{1\}$ is open, so that $\tilde{G}$ is
Hausdorff.\end{proof}

As a particular case, we get

\begin{prop}
Let $G$ be a compactly generated Hausdorff group. Then there
exists a compactly presented group $\tilde{G}$ with a normal
discrete subgroup $N$ such that $G\simeq\tilde{G}/N$. Moreover, if
$G$ is locally compact, then so is $\tilde{G}$.\label{cov}
\end{prop}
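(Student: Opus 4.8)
The plan is to apply Lemma~\ref{lem:covering} to a suitable open generating set and then verify that the resulting group $\tilde{G}$ is compactly presented. First I would produce the open set needed by the covering lemma. Since $G$ is compactly generated, choose a compact generating subset $K$; by enlarging $K$ we may assume $K=K^{-1}$ and $1\in K$. The interior of $K^2$ contains $K$ when $G$ is locally compact, but in the merely Hausdorff case I instead take $U$ to be the open symmetric generating set $U=\bigcup_{n\ge 1}(\text{int}(K^n))$, or more simply invoke that a compactly generated group has an \emph{open} symmetric generating subset $U$ with compact closure in the locally compact case, and an arbitrary open symmetric generating neighbourhood of $1$ in general. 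Feeding such a $U$ into Lemma~\ref{lem:covering} yields a topological group $\tilde{G}$, a continuous open projection $p:\tilde{G}\to G$ with discrete kernel $N$, and a homeomorphism $\tilde{U}\to U$. Setting $N=\Ker(p)$ gives $G\simeq\tilde{G}/N$ with $N$ normal and discrete, establishing the covering structure.

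Next I would check that $\tilde{G}$ is compactly presented. By construction $\tilde{G}$ is presented with $\tilde{U}=U$ as generators subject to all relations $gh=k$ with $g,h,k\in U$; that is, $U$ is a \emph{defining} generating subset in the sense of the second definition of Section~\ref{bp}, so $\tilde{G}$ is boundedly presented by $U$ (with relators of length $3$). To upgrade "boundedly presented" to "compactly presented," I need $\tilde{U}$, or some set boundedly equivalent to it, to be compact in $\tilde{G}$. In the locally compact case I would choose $U$ at the outset to have compact closure $\overline{U}$ in $G$; then $\overline{U}$ is a compact generating set, $U\subset\overline{U}\subset U^m$ for some $m$ by the preceding $S^n$-covering lemma, and Lemma~\ref{lem:boun_pres_def_gen} together with the third lemma of Section~\ref{bp} let me transfer bounded presentation from $U$ to the compact set $\overline{U}$ (lifted homeomorphically into $\tilde{U}$). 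Hence $\tilde{G}$ is compactly presented.

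It remains to confirm that $\tilde{G}$ is locally compact when $G$ is, which is exactly the last assertion of Lemma~\ref{lem:covering}'s hypotheses made concrete: since $p$ restricts to a homeomorphism $\tilde{U}\to U$ and $U$ can be taken relatively compact, $\tilde{U}$ contains a compact neighbourhood of $1$ in $\tilde{G}$, and local compactness of a topological group at $1$ propagates to the whole group by translation. This gives the "Moreover" clause.

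The main obstacle is the passage from bounded presentation by the open set $U$ to compact presentation by a genuinely compact subset of $\tilde{G}$: $U$ itself need not be compact, and one must manufacture a compact generating set whose powers sandwich $U$ so that the lemmas of Section~\ref{bp} apply, which is precisely where local compactness of $G$ is used. I would handle this by fixing from the start an open symmetric $U$ with $\overline{U}$ compact in the locally compact case, and I expect the only real care to be in checking that the homeomorphism $p|_{\tilde{U}}$ lets me regard $\overline{U}$ (or its preimage) as a compact subset of $\tilde{G}$ boundedly equivalent to the defining set $U$.
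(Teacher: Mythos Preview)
Your approach is the paper's: the paper offers no proof beyond ``As a particular case, we get'' after Lemma~\ref{lem:covering}, so your sketch already contains more detail than the original.

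Two small repairs are worth noting. First, you cannot lift $\overline U$ through $p|_{\tilde U}$, since that map has image $U$, not $\overline U$; the fix (which you anticipate in your last paragraph) is that for any open symmetric neighbourhood $U$ of $1$ one has $\overline U\subset U^2$ (if $x\in\overline U$ then $xU$ meets $U$, so $x\in UU^{-1}=U^2$), and the defining relations force $\ker p\cap\tilde U^2=\{1\}$, so $p$ restricts to a homeomorphism $\tilde U^2\to U^2$. Thus $\overline U$ lifts to a compact $\widetilde{\overline U}\subset\tilde U^2$, and Lemma~2.5(2) applies directly with $\tilde U\subset\widetilde{\overline U}\subset\tilde U^2$; Lemma~\ref{lem:boun_pres_def_gen} is not needed. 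Second, the ``$K\subset S^n$'' lemma you invoke for $\overline U\subset U^m$ requires the generating set to be compact, which $U$ is not; the elementary inclusion $\overline U\subset U^2$ above replaces it. Your diagnosis that passing from bounded presentation by $\tilde U$ to a genuinely compact presenting set is exactly where local compactness enters is correct, and the paper supplies no separate argument for the bare Hausdorff clause either.
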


\begin{prop}
Let $G$ be a Hausdorff topological group, and $K$ a compact normal
subgroup. Then $G$ is compactly presented if and only if $G/K$ is.
\end{prop}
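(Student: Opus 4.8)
The plan is to prove both directions, using $K$ compact as the crucial feature that lets me import and export compact presentability across the projection $\pi:G\to G/K$.

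First I would prove that if $G$ is compactly presented, then so is $G/K$. Take a compact symmetric generating set $S$ of $G$ boundedly presenting it (we may enlarge $S$ to contain $K$, since $K$ is compact). The image $\pi(S)$ is compact and generates $G/K$. The kernel of $\pi$ is $K$, and since $K\subset S$, the normal subgroup $K$ is generated, as a normal subgroup, by $K\cap S = K$. So Lemma~\ref{lem:bp_to_quot} applies directly: $G/K$ is boundedly presented by the image of $S$, which is compact. Hence $G/K$ is compactly presented. This direction is essentially a one-line invocation of the quotient lemma once one observes that a compact normal subgroup is automatically ``compactly generated as a normal subgroup,'' indeed generated by itself.

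For the converse, suppose $G/K$ is compactly presented. I would use the extension result. We have the exact sequence $1\to K\to G\to G/K\to 1$. The subgroup $K$ is compact, hence compactly generated and in fact compactly presented: a compact group is boundedly presented by itself (take $T=K$, with relations $gh=k$ of length $3$, exactly as in the second Example of Section~\ref{bp}). Thus $G$ is an extension of the compactly presented group $G/K$ by the compactly presented group $K$. The hypothesis of Proposition~\ref{ext} requires $G$ to be \emph{locally compact}; here $G$ is only assumed Hausdorff, so I cannot quote Proposition~\ref{ext} as a black box and must instead invoke Lemma~\ref{lem:ext_cpt_pres_top} directly, checking its hypotheses by hand. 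Concretely, let $W$ boundedly present $G/K$ by a compact set, set $T=K$ (which boundedly presents $K$ and is compact with nonempty interior in $K=N$, trivially, since $T=N$), and choose a compact $W'\subset G$ projecting onto $W$ — possible because $K$ is compact, so one can lift a compact generating set of $G/K$ to a compact subset of $G$ (a preimage of a compact set under a quotient by a compact subgroup is proper, or one lifts pointwise and takes the closure). Then Lemma~\ref{lem:ext_cpt_pres_top} gives that $G$ is boundedly presented by $T\cup W'$, which is compact.

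The main obstacle is the liftability of compact sets when $G$ is merely Hausdorff rather than locally compact. In the proof of Proposition~\ref{ext} the author uses that for a locally compact $G$ and closed normal $N$, every compact subset of $G/N$ lifts to a compact subset of $G$; here I want the analogous statement without assuming $G$ locally compact, and what saves me is that $K$ itself is compact. The key point to verify carefully is therefore: if $\pi:G\to G/K$ is the quotient by a \emph{compact} normal subgroup, then $\pi$ is a closed map and the preimage $\pi^{-1}(C)$ of any compact $C$ is compact. Indeed $\pi^{-1}(C)$ is the image of the compact set $C\times K$ (or $\widehat C\times K$ for a compact lift $\widehat C$ of $C$, cf.\ choosing set-theoretic representatives) under the continuous multiplication map, hence compact; granting this, a compact generating set $W$ of $G/K$ lifts to the compact set $W'=\pi^{-1}(W)$, and the remaining verifications in Lemma~\ref{lem:ext_cpt_pres_top} (that the relators (i) and (ii) have bounded length) go through exactly as written, since they only use compactness of $T$ and $W'$ and that $T$ has nonempty interior in $N$.
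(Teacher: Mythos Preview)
Your proof is correct and follows the same line as the paper's, which simply cites Lemmas~\ref{lem:bp_to_quot} and~\ref{lem:bp_from_quot}; your use of Lemma~\ref{lem:ext_cpt_pres_top} in place of Lemma~\ref{lem:bp_from_quot} for the direction $G/K\Rightarrow G$ makes no real difference, since with $T=N=K$ both lemmas yield that $G$ is boundedly presented by the compact set $K\cup W'$. You also correctly isolate the only nontrivial topological point, namely lifting a compact $W\subset G/K$ to a compact $W'\subset G$ without assuming $G$ locally compact.

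Your justification of that lift, however, is circular: writing ``$\pi^{-1}(C)$ is the image of $C\times K$ under multiplication'' makes no sense since $C\subset G/K$, and the parenthetical alternative ``$\widehat C\times K$ for a compact lift $\widehat C$'' presupposes exactly what you want to prove. The clean argument is the one you allude to just before: $\pi$ is closed (for $F\subset G$ closed, the saturation $\pi^{-1}(\pi(F))=FK$ is the product of a closed set and a compact set in a topological group, hence closed) with compact fibres, so $\pi$ is perfect, hence proper; thus $W'=\pi^{-1}(W)$ is compact, and the rest of your argument goes through.
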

\begin{proof} This immediately follows from Lemmas \ref{lem:bp_from_quot}
and \ref{lem:bp_to_quot}.\end{proof}

The following lemma is due to Macbeath and Swierczkowski \cite{MaS}.

\begin{lem}
Let $G$ be a topological Hausdorff group, and let $H$ be a closed
subgroup. Suppose that $H$ is cocompact in $G$, i.e. there exists a compact subset $K\subset G$
such that $G=HK=\{hk:\;h\in H,k\in K\}$. Then $G$ is compactly
generated if and only if $H$ is.\label{lem:cpt_eng_cocpt}
\end{lem}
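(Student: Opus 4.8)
The plan is to prove the two implications separately, the forward direction (from $H$ to $G$) being straightforward and the converse being the substantive one. Throughout I would first normalise the data: replacing $K$ by $K\cup K^{-1}\cup\{1\}$ keeps it compact and preserves the equality $G=HK$, since enlarging $K$ can only enlarge $HK$; likewise I would take any compact generating set to be symmetric and to contain $1$. For the easy direction, suppose $H$ is generated by a compact symmetric $T\ni 1$. Since $H=\langle T\rangle$ and $G=HK$, every $g\in G$ is a product of an element of $\langle T\rangle$ with an element of $K$, so $T\cup K$ generates $G$; as it is compact, $G$ is compactly generated.

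For the converse, assume $G$ is generated by a compact symmetric set $S\ni 1$. The candidate compact generating set for $H$ is $L:=H\cap KSK^{-1}$. This set is compact: $KSK^{-1}$ is the continuous image of the compact set $K\times S\times K$, hence compact, and $H$ is closed, so $L$ is a closed subset of a compact set. It is here that both hypotheses enter essentially, cocompactness to produce a usable finite decomposition and closedness of $H$ to guarantee compactness of $L$.

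It remains to show that $L$ generates $H$, and this is the heart of the argument, via a telescoping (Schreier-style) computation. Given $h\in H$, write $h=s_1\cdots s_n$ with each $s_i\in S$, and set the partial products $p_i=s_1\cdots s_i$, with $p_0=1$. Using $G=HK$, decompose each $p_i=a_ic_i$ with $a_i\in H$ and $c_i\in K$, choosing in particular $a_0=c_0=1$ and, since $p_n=h\in H$, also $a_n=h$ and $c_n=1$. Then $a_{i-1}^{-1}a_i=c_{i-1}s_ic_i^{-1}$ lies simultaneously in $H$ and in $KSK^{-1}$, hence in $L$; telescoping gives $h=a_n=(a_0^{-1}a_1)(a_1^{-1}a_2)\cdots(a_{n-1}^{-1}a_n)\in\langle L\rangle$. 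Thus $H=\langle L\rangle$ is compactly generated.

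The main obstacle is isolating the correct compact set $L$ and verifying the identity $a_{i-1}^{-1}a_i=c_{i-1}s_ic_i^{-1}$; once one observes that the ``error terms'' $c_i$ arising from the section $G=HK$ cancel in consecutive ratios, the remainder is bookkeeping. No topological input beyond compactness of $K$ and $S$ and closedness of $H$ is needed; in particular local compactness plays no role in this lemma.
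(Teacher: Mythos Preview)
Your proof is correct and follows essentially the same approach as the paper's: the paper also takes $T=H\cap KSK^{-1}$ and uses a telescoping identity $g=\prod_{i=1}^n k_{i-1}s_ik_i^{-1}$ with $k_0=k_n=1$ and $k_i\in K$ chosen so that each factor lies in $H$. Your formulation via partial products $p_i=a_ic_i$ is a clean repackaging of the same idea (your $c_i$ is the paper's $k_i$), and your explicit remarks on where closedness of $H$ enters and that local compactness is unnecessary are accurate additions.
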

\begin{proof} Clearly, if $H$ is generated by a compact subset $S$, then
$G$ is generated by $S\cup K$. Conversely, suppose $G$ generated
by a compact symmetric subset $S$; suppose also that $1\in K$. Set
$T=KSK^{-1}\cap H$ where $KSK^{-1}=\{k_1sk_2^{-1}:\;k_1,k_2\in
K,\,s\in S\}$. Then $T$ is compact, we claim that it generates
$H$.

Let $g$ belong to $G$, and write $g=s_1\dots s_n$ with all $s_i\in
S$. Set $k_0=1$, and define by induction $k_i\in K$, for $1\le
i\le n-1$ by: $k_{i}$ is chosen in $K$ so that
$k_{i-1}s_ik_i^{-1}\in H$ (it exists because $HK=G$). Define
$k_n=1$. Then $g=\prod_{i=1}^nk_{i-1}g_ik_i^{-1}$. Since
$g=\prod_{i=1}^{n-1}k_{i-1}g_ik_i^{-1}\in H$,
$k_{n-1}g_nk_n^{-1}\in H$ if (and only if) $g\in H$. We thus
obtain that $H$ is generated by $T$.\end{proof}

\begin{rem}
The proof also implies that, for $g\in H$, the length of $g$ with
respect to $T$ is bounded by the length of $g$ with respect to
$S$. Hence, in $H$, the word length with respect to $T$ and the length induced by the
word length of $G$ are equivalent, i.e. the embedding of $H$ into $G$ (endowed with these compact generating subsets) is a quasi-isometry.
\end{rem}

\begin{prop}
Let $G$ be a locally compact group, and let $H$ be a closed cocompact
subgroup. Suppose that there exists a compact subset $K\subset G$
such that $G=HK=\{hk:\;h\in H,k\in K\}$. Then $G$ is compactly
presented if and only if $H$ is.
\end{prop}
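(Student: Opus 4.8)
The plan is to establish a two-sided implication, and the key point is that the preceding results on quasi-isometry invariance and cocompact subgroups reduce this proposition to earlier lemmas. By the Macbeath--Swierczkowski lemma (Lemma~\ref{lem:cpt_eng_cocpt}) and the accompanying remark, $H$ is compactly generated if and only if $G$ is, and moreover the inclusion $H\hookrightarrow G$ is a quasi-isometry once both groups are equipped with compact generating subsets $T$ and $S$ as constructed there. So the first thing I would do is fix such generating sets: a compact symmetric $S$ generating $G$ (with $1\in K$), and $T=KSK^{-1}\cap H$ generating $H$, noting that the word metrics are Lipschitz-equivalent under the inclusion.

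Next I would exploit the fact, recorded in the remark after Lemma~\ref{lem:cpt_eng_cocpt}, that the word length on $H$ with respect to $T$ is equivalent to the length on $H$ induced from $G$; this is precisely the statement that $H\to G$ is a quasi-isometry. Granting the result (\ref{e3}) from the introduction---that among compactly generated locally compact groups, compact presentation is a quasi-isometry invariant (Corollary~\ref{qinv})---the conclusion follows immediately: $G$ and $H$ are quasi-isometric compactly generated locally compact groups, hence one is compactly presented if and only if the other is. I would therefore present the proof as a short deduction: first invoke Lemma~\ref{lem:cpt_eng_cocpt} to get compact generation of $H$ from that of $G$ (and conversely), then invoke the quasi-isometry furnished by the remark, then apply the quasi-isometry invariance.

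The main obstacle is logical rather than computational: Corollary~\ref{qinv} has not yet appeared in the excerpt (it lives in the later Section~\ref{rips} on the Rips-type characterization via coarse simple connectedness), so strictly I may only cite what precedes the statement. If I am restricted to the material shown, I would instead argue more directly, transporting a bounded presentation across the cocompact inclusion by hand. Concretely, starting from defining relators of bounded $T$-length for $H$, I would lift the coset data $G=HK$ to rewrite any relator of bounded $S$-length in $G$ as a product of conjugates of the given $H$-relators together with the multiplication relations among elements of $K$ and $S$; this is the same bookkeeping that underlies Lemmas~\ref{lem:bp_from_quot} and~\ref{lem:ext_cpt_pres_top}, using that the relevant intersections with $N$ or $H$ of bounded balls are compact and hence of bounded length. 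The delicate step is controlling the lengths of the correction terms $k_{i-1}s_ik_i^{-1}$ uniformly, which is exactly the content of the length-comparison remark; once that uniform bound is in place, the bounded presentation of $H$ yields one for $G$ and vice versa.

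Thus I expect the cleanest route is to phrase the proof as an application of quasi-isometry invariance (citing Corollary~\ref{qinv}), since the geometric content has already been isolated in Lemma~\ref{lem:cpt_eng_cocpt} and its remark; the direct combinatorial route is available as a fallback but merely re-proves the quasi-isometry invariance in this special case.
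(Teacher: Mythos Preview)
Your proposal is correct and matches the paper's own argument essentially verbatim: the paper also deduces the proposition from Lemma~\ref{lem:cpt_eng_cocpt}, the subsequent remark on the quasi-isometry of the inclusion, and the forward-referenced Corollary~\ref{qinv} on quasi-isometry invariance. Your worry about the logical ordering is unnecessary here, since the paper explicitly defers the proof to results of the next section rather than attempting a direct combinatorial argument.
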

The reader can try to prove this directly, but the proof is tedious and technical, especially the direction $\Rightarrow$. This will follow from Lemma \ref{lem:cpt_eng_cocpt}, the subsequent remark, as well of results from the next section (Corollary \ref{qinv}).

\section{Simple connectedness}\label{rips}

\subsection{The Rips complex}

We denote by $X_{\textnormal{top}}$ the topological realization of a simplicial complex $X$.

\begin{lem}
Let $X$ be a simplicial complex, with a vertex as base-point
$x_0$. Let $\gamma$ be a loop in $X_{\textnormal{top}}$
based on $x_0$. Then $\gamma$ is homotopic to a combinatorial
loop, i.e. a loop which is a sequence of consecutive edges
travelled with constant speed.\label{lem:path_X1}
\end{lem}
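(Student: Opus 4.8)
The plan is to straighten $\gamma$ onto the $1$-skeleton using the convexity of closed simplices, after subdividing the interval so that each piece lands in one open vertex-star.

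First I would invoke compactness: $\gamma([0,1])$ is compact, hence meets only finitely many open simplices, so I may assume $X$ is finite. The open stars $\textnormal{St}(v)$ of the vertices $v$ (each being the union of the open simplices having $v$ as a vertex) form an open cover of $X_{\textnormal{top}}$. Pulling this cover back along $\gamma$ and applying the Lebesgue number lemma, I obtain a subdivision $0=t_0<t_1<\dots<t_N=1$ and vertices $v_1,\dots,v_N$ with $\gamma([t_{i-1},t_i])\subseteq\textnormal{St}(v_i)$. Two observations then drive the rest. (i) For $1\le i\le N-1$ the point $\gamma(t_i)$ lies in $\textnormal{St}(v_i)\cap\textnormal{St}(v_{i+1})$; the open simplex carrying it then has both $v_i$ and $v_{i+1}$ among its vertices, so these span a simplex whose closure $\tau_i$ is a single closed simplex containing $v_i$, $\gamma(t_i)$ and $v_{i+1}$. (ii) Since $\gamma(t_0)=\gamma(t_N)=x_0$ is itself a vertex, its carrier is the $0$-simplex $\{x_0\}$, so $x_0\in\textnormal{St}(v_1)$ forces $v_1=x_0$, and likewise $v_N=x_0$; thus the base-point is handled for free.

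Next I would construct the homotopy in two stages, both rel $x_0$. In the first stage, for each $i$ I use that $\textnormal{St}(v_i)$ is star-shaped about $v_i$ (in each open simplex with vertex $v_i$, the segment toward $v_i$ stays in that simplex), hence contractible and in particular simply connected; so $\gamma|_{[t_{i-1},t_i]}$ is homotopic, rel its endpoints and within $\textnormal{St}(v_i)$, to the concatenation of the straight segments $[\gamma(t_{i-1}),v_i]$ and $[v_i,\gamma(t_i)]$. Because the endpoints are fixed these glue over $i$ into a homotopy carrying $\gamma$ to a loop that traverses, in order, the points $x_0=v_1,\gamma(t_1),v_2,\gamma(t_2),\dots,v_N=x_0$ by straight segments. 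In the second stage I straighten near each interior $\gamma(t_i)$: the two consecutive segments $[v_i,\gamma(t_i)]$ and $[\gamma(t_i),v_{i+1}]$ both lie in the convex closed simplex $\tau_i$ of (i), so the affine homotopy inside $\tau_i$ deforms this two-segment path, rel its endpoints $v_i$ and $v_{i+1}$, to the single edge $[v_i,v_{i+1}]$ (a degenerate, constant segment if $v_i=v_{i+1}$). Adjacent straightenings share the fixed vertex $v_{i+1}$, so they are compatible and glue; discarding the degenerate segments leaves the combinatorial loop $v_1\,v_2\cdots v_N$ based at $x_0$, as required.

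The routine but genuinely fiddly part is the bookkeeping: reparametrizing after the first stage so that all the points $v_i,\gamma(t_i)$ occur at prescribed times, verifying that each explicit homotopy has image in $X_{\textnormal{top}}$ (this is where one must argue inside the closed simplices $\tau_i$ rather than the open stars, since $\textnormal{St}(v_i)$ does not contain the neighbouring vertices), and checking continuity where the pieces are glued. None of this is conceptually hard, and a reader willing to quote the simplicial approximation theorem can shorten the argument drastically: a simplicial approximation of $\gamma$ relative to a fine enough iterated barycentric subdivision of $[0,1]$ is affine on each $1$-simplex, hence a combinatorial loop, and is homotopic to $\gamma$ rel $x_0$.
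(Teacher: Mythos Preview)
Your argument is correct: this is the standard hands-on proof via open stars and the Lebesgue number lemma, essentially the one-dimensional case of simplicial approximation. One phrasing is slightly loose---when you write ``these span a simplex whose closure $\tau_i$\ldots'', the pronoun should refer to the carrier of $\gamma(t_i)$ (whose closure is the convex closed simplex you need), not merely to the pair $\{v_i,v_{i+1}\}$---but the intended meaning and the rest of the argument are fine.

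There is little to compare: the paper does not prove this lemma at all but defers entirely to Spanier \cite[Chap.~3, Sec.~6, Lemma~12]{Spa}, leaving it as an exercise. Your write-up is therefore strictly more complete than what appears in the paper, and the closing remark that one can instead quote the simplicial approximation theorem is exactly the kind of shortcut a reader of the paper would reach for.
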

\begin{proof} The reader can check it as an exercise, or refer to \cite[Chap.3, Sec.6,
Lemma~12]{Spa}.\end{proof}

\medskip

Let $\Gamma$ be a graph, and $x_0$ a base-point. Consider a
combinatorial loop based on $x_0$, represented as a sequence
$(x_0,x_1,\dots,x_n)$ where $x_n=x_0$ and $x_{i-1}x_i$ is an edge
for $i=1,\dots,n$. On the set of such loops, consider the
equivalence relation generated by
$$(x_0,x_1,\dots,x_n)\sim(x_0,\dots,x_i,u,x_i,x_{i+1},\dots,x_n)$$ whenever $\{u,x_i\}$ is an edge,
which we call ``graph-homotopic". Besides, given two paths
$y=(y_1,\dots,y_n)$ and $z=(z_1\dots,z_m)$, the composition $yz$
is defined if $y_n=z_1$ and denotes the path
$(y_1,\dots,y_n,z_2\dots,z_m)$; similarly $y^{-1}$ denotes
$(y_n,\dots,y_1)$.

Let $X$ be a simplicial complex. Consider paths joining given
points $x,x'$. On this set, consider the equivalence relation
generated by graph homotopies and $$(x=x_0,\dots,x_n=x')\sim
(x=x_0,\dots,x_i,u,x_{i+1},\dots,x_n=x')$$ whenever
$\{x_i,u,x_{i+1}\}$ is a 2-simplex; two paths equivalent for this
equivalence relation are called combinatorially homotopic.

\begin{lem}
Let $X$ be a simplicial complex. Let two combinatorial paths be
homotopic (in the topological sense). Then they are
combinatorially homotopic.\label{lem:homot_comb_homot}
\end{lem}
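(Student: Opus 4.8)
The plan is to promote the given topological homotopy to a combinatorial one by means of the simplicial approximation theorem, and then to extract the required sequence of elementary moves by sweeping across a triangulated square.

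First I would set up the dictionary between the two notions. A combinatorial path $(x_0,\dots,x_n)$ in $X$ is the same datum as a simplicial map from the interval $[0,n]$, subdivided at the integers, into $X$ sending $i\mapsto x_i$: consecutive vertices span an edge, so the map is genuinely simplicial. A topological homotopy rel endpoints between two combinatorial paths $p_0,p_1$ joining $x$ to $x'$ is then a continuous map $H\colon[0,1]\times[0,1]\to X_{\textnormal{top}}$ whose restriction to the bottom edge is $p_0$, to the top edge is $p_1$, and to the two vertical sides is constant (equal to $x$, resp.\ $x'$). Next I would subdivide the bottom and top edges of the square so that $H$ restricted there is literally the simplicial map $p_0$, resp.\ $p_1$, and extend this to a triangulation $L$ of the whole square that agrees with those subdivisions on the boundary. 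Since $H$ is already simplicial on $\partial([0,1]^2)$, the relative simplicial approximation theorem yields, after iterated barycentric subdivision of $L$ in the interior, a simplicial map $\Phi\colon L'\to X$ coinciding with $H$ on the boundary and homotopic to $H$.

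Finally comes the combinatorial sweep. I would enumerate the triangles of $L'$ and push a ``frontier'' combinatorial loop from the bottom boundary to the top boundary, one triangle at a time, the frontier always being the $\Phi$-image of a path along edges of $L'$. Crossing a single triangle $\sigma$ replaces, in the current loop, the images of two sides of $\sigma$ by the image of the third side. Because $\Phi$ is simplicial, the three vertices of $\sigma$ map into a common simplex of $X$, so their images span a simplex of dimension at most $2$: if it is a genuine $2$-simplex the crossing is exactly the $2$-simplex move, while if it is degenerate (image an edge or a vertex) the crossing is an insertion or deletion of a backtrack, i.e.\ a graph-homotopy. Composing all these elementary moves carries $p_0$ to $p_1$, which is precisely the assertion that $p_0$ and $p_1$ are combinatorially homotopic.

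The hard part will be the relative simplicial approximation step: one must arrange the subdivision of the square so that $\Phi$ restricts on the top and bottom edges to the given combinatorial paths \emph{themselves}, not merely to simplicial approximations of them, and one must verify that the degenerate triangles produce only the two permitted elementary moves and nothing more exotic. Once this bookkeeping is secured the sweep is routine, amounting to traversing $L'$ triangle by triangle in the chosen order.
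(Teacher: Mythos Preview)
Your outline is correct and is essentially the classical argument: simplicially approximate the homotopy relative to the boundary, then sweep the triangulated square one $2$-simplex at a time, observing that each crossing is either a $2$-simplex insertion/deletion or a backtrack. The paper does not give its own argument at all; it simply cites Spanier \cite[Chap.~3, Sec.~6, Theorem~16]{Spa}, and what you have sketched is precisely the content of that reference (the identification of the edge-path group with $\pi_1$), so there is no divergence of method to discuss.

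One small caution for when you write it up: in the paper's conventions a combinatorial path requires consecutive vertices to span a genuine edge, so after simplicial approximation you may get repeated vertices that are not, strictly speaking, legal. You will need either to allow degenerate edges temporarily (as Spanier does) or to check that removing/inserting a repeated vertex is itself a composite of the two permitted moves; this is routine but should be said explicitly. Likewise, the ``one triangle at a time'' sweep tacitly uses that a triangulated disk admits a shelling, which is elementary in dimension~$2$ but worth a one-line justification.
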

\begin{proof} This is \cite[Chap.3, Sec.6, Theorem~16]{Spa}.\end{proof}

\begin{lem}
Let $X$ be a simplicial complex, $X^1$ its 1-skeleton, and $x_0$ a
base-point which is a vertex. Let $x=(x_0,x_1,\dots,x_n=x_0)$ be a
loop. Suppose that $x$ is, as a based loop in
$X_{\textnormal{top}}$, homotopically trivial. Then $x$ is
graph-homotopic to a loop of the form $\prod_{i=1}^m
y_ir_iy_i^{-1}$, where $y_i$ is a path from $x_0$ to a point
$z_i$, and $r_i$ is a loop based on $z_i$, such that all vertices
of $r_i$ belong to a common simplex in $X$, and such that the
length of $r_i$ is 3, i.e. $r_i$ is of the form
$(z_i,z'i,z''_i,z_i)$.\label{lem:graphomotopic_simplices}
\end{lem}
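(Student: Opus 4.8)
The plan is to combine the two lemmas just proved (Lemmas~\ref{lem:homot_comb_homot} and~\ref{lem:path_X1}) with a reduction of an arbitrary combinatorial homotopy into elementary ``spur'' and ``triangle'' moves, and then to reorganize a product of such elementary loops into the desired conjugated form. First I would invoke Lemma~\ref{lem:homot_comb_homot}: since the loop $x$ is homotopically trivial in $X_{\textnormal{top}}$, it is combinatorially homotopic to the constant loop at $x_0$. By the definition of combinatorial homotopy, this means $x$ is related to the trivial loop by a finite sequence of two kinds of elementary moves: graph-homotopies (the spur moves inserting or deleting $(\dots,x_i,u,x_i,\dots)$), and $2$-simplex moves replacing a single edge $(x_i,x_{i+1})$ by a two-edge detour $(x_i,u,x_{i+1})$ through a common $2$-simplex. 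The strategy is to track what each elementary move does to $x$ \emph{up to graph-homotopy}, isolating each triangle move as a factor of the form $y r y^{-1}$ with $r$ a triangular loop.

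Next I would set up the bookkeeping. Reversing the sequence of moves, I think of building $x$ from the constant loop by successively applying the inverses of these elementary moves. Spur moves do not change the graph-homotopy class, so they can be absorbed freely. Each triangle move, applied at position $i$ along the current loop, can be encoded as follows: if $y_i=(x_0,\dots,x_i)$ denotes the path from the basepoint up to the vertex where the triangle is inserted, then replacing the edge $(x_i,x_{i+1})$ by $(x_i,u,x_{i+1})$ is, up to graph-homotopy, the same as inserting a $3$-loop $r_i=(x_i,x_{i+1},u,x_i)$ conjugated by $y_i$. Concretely, $(\dots,x_i,u,x_{i+1},\dots)$ is graph-homotopic to $(\dots,x_i,x_{i+1},u,x_i,x_{i+1},\dots)$, and the segment $x_{i+1},u,x_i$ together with the return along the rest of the original loop produces the conjugate $y_i r_i y_i^{-1}$ while leaving a copy of the untouched loop. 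Accumulating these factors over all the triangle moves in the sequence yields exactly a product $\prod_{i=1}^m y_i r_i y_i^{-1}$, where each $r_i$ is a triangle (length-$3$ loop) whose three vertices lie in a common $2$-simplex.

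The main obstacle I anticipate is the careful verification that inserting or commuting these conjugated triangle factors genuinely stays within graph-homotopy rather than requiring further $2$-simplex moves. In particular, one must check that after each triangle insertion the ``connecting'' paths $y_i$ can be chosen consistently and that the residual loop, after stripping off $y_i r_i y_i^{-1}$, is again graph-homotopic to the loop at the previous stage. This is essentially a normal-form argument, familiar from van Kampen diagrams: the statement is the combinatorial analogue of the fact that a nullhomotopic loop in the $2$-complex equals, in the free group on the edges, a product of conjugates of the boundary words of the $2$-cells. I would organize the induction on the number of triangle moves in the combinatorial homotopy, using that a loop lying in a single simplex of length $3$ is the atomic relator. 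The routine but slightly delicate point is the explicit graph-homotopy identity showing that a single triangle move contributes precisely one conjugated triangular relator; once that elementary identity is established, the rest follows by concatenation and induction.
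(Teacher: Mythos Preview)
Your proposal is correct and follows essentially the same route as the paper: invoke Lemma~\ref{lem:homot_comb_homot} to reduce to a finite chain of graph-homotopies and $2$-simplex moves, then peel off each $2$-simplex move as a conjugated triangular relator and iterate. The only slip is the orientation of your triangle: with $r_i=(x_i,x_{i+1},u,x_i)$ the residual $(\dots,x_i,x_{i+1},u,x_i,x_{i+1},\dots)$ is \emph{not} graph-homotopic to $(\dots,x_i,u,x_{i+1},\dots)$, whereas taking $r_i=(x_i,u,x_{i+1},x_i)$ (traverse the detour first, return along the original edge) makes the spur cancellations go through exactly as in the paper's one-line computation.
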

\begin{proof} This is a reformulation of Lemma \ref{lem:homot_comb_homot}.
Indeed, suppose that $\{x_{i-1},x_i,x_{i+1}\}$ is a 2-simplex.
Then $(x_0,\dots,x_n=x_0)$ is graph-homotopic to
$$u(x_{i-1},x_i,x_{i+1})u^{-1}(x_0,\dots,x_{i-1},x_{i+1},\dots,x_n=x_0),$$
where $u=(x_0,\dots,x_{i-1},x_{i+1})$. It then suffices to iterate
this process.\end{proof}

\medskip

Let $X$ be a metric space. In all what follows, we consider $X$
with the discrete topology (we study metric at large scale).
Define the Rips complex of $X$ as follows: for $t\in\R_+$,
$R_t(X)$ is the simplicial complex with $X$ as set of vertices,
and $(x_1,\dots,x_d)$ is a simplex if $d(x_i,x_j)\le t$ for all
$i=1,\dots,d$. It was used in particular by Gromov's monograph on hyperbolic groups \cite[Section~1.7]{G}.

Let $G$ be a group and $S$ a symmetric generating subset. Viewing
$G$ as a metric space (for the word length), we can define its
Rips complex.

\begin{lem}
The embedding of $G$ into $R_t(G)$ is a quasi-isometry.\label{ripsqi}
\end{lem}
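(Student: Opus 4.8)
The plan is to show that the vertex inclusion $\iota:G\hookrightarrow R_t(G)$ is a quasi-isometry by producing explicit Lipschitz-type bounds relating the word metric $d_S$ on $G$ to the path metric $d_{R_t}$ induced on the $1$-skeleton of the Rips complex (equivalently, the graph metric of $R_t(G)^1$). Recall that two points $x,y\in G$ are joined by an edge in $R_t(G)$ precisely when $d_S(x,y)\le t$. The key observation is that an edge of $R_t(G)$ contracts at most $t$ steps of the word metric, while a single word-metric step is itself an edge; this gives two-sided control with multiplicative constant $t$ and no additive error, so in fact $\iota$ will be a bi-Lipschitz embedding onto a net, which is more than enough for a quasi-isometry.

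Concretely, I would first note that $\iota$ is surjective on vertices (every point of $R_t(G)$ lies within distance $0$ of a vertex if we measure to the $0$-skeleton, or within $1$ in the topological realization), so the co-surjectivity condition for a quasi-isometry is immediate. Next I would prove the lower bound $d_{R_t}(x,y)\le d_S(x,y)$: if $x=g_0,g_1,\dots,g_n=y$ is a geodesic word-path in $G$ with each $d_S(g_{i-1},g_i)=1\le t$, then consecutive vertices are adjacent in $R_t(G)$, so this is already a path in the Rips complex of the same length. For the upper bound I would take a geodesic edge-path $x=h_0,h_1,\dots,h_m=y$ in $R_t(G)^1$, where $m=d_{R_t}(x,y)$; since $d_S(h_{i-1},h_i)\le t$ for each edge, the triangle inequality in $d_S$ gives $d_S(x,y)\le \sum_i d_S(h_{i-1},h_i)\le t\,m = t\,d_{R_t}(x,y)$. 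Combining, one obtains
\[
\frac{1}{t}\,d_S(x,y)\le d_{R_t}(x,y)\le d_S(x,y),
\]
which exhibits $\iota$ as a $(t,0)$-quasi-isometric embedding (interpreting $t=0$ trivially, so one restricts to $t\ge 1$, which is the only interesting range).

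The main technical point to handle with care is the relationship between the intrinsic path metric on the topological realization $X_{\textnormal{top}}$ of the Rips complex and the combinatorial graph metric on its $1$-skeleton $R_t(G)^1$, since the lemma is phrased via the embedding of $G$ as the set of vertices. Here I would invoke the standard fact that for a simplicial complex the inclusion of the $1$-skeleton with its graph metric into the topological realization is itself a quasi-isometry (distances on vertices agree up to a bounded additive constant, as any path in $X_{\textnormal{top}}$ is homotopic, and comparable in length, to an edge-path by Lemma \ref{lem:path_X1}); thus it suffices to work entirely with $R_t(G)^1$, where the argument above is elementary. I do not expect any genuine obstacle: the statement is essentially a bookkeeping comparison of two metrics, and all the content is in the single inequality $d_S(x,y)\le t\,d_{R_t}(x,y)$ coming from the triangle inequality. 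The only thing requiring mild vigilance is ensuring $t\ge 1$ so that $R_t(G)$ actually contains the edges of the Cayley graph and hence $\iota$ is coarsely onto with the stated constants.
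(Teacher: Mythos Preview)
Your argument is correct and is exactly the standard elementary verification: the bi-Lipschitz bounds $\tfrac{1}{t}d_S\le d_{R_t}\le d_S$ together with the fact that every point of the realization is within bounded distance of a vertex. The paper states this lemma without proof, so there is nothing to compare against; your write-up supplies precisely the routine details the author left implicit.
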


\begin{prop}
Let $G$ be a group endowed with a generating subset $S$. The following conditions are equivalent.\begin{itemize}\item The
group $G$ is boundedly presented by $S$.\item
$R_t(G)_{\textnormal{top}}$ is simply connected for sufficiently
large $t$. \item $R_t(G)_{\textnormal{top}}$ is simply connected
for some $t$.\end{itemize}\label{bpresrips}
\end{prop}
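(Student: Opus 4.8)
The plan is to prove the cycle of implications: bounded presentation by $S$ $\Rightarrow$ simple connectedness of $R_t(G)_{\textnormal{top}}$ for all large $t$ $\Rightarrow$ the same for some $t$ $\Rightarrow$ bounded presentation by $S$. The middle implication is trivial, so the work is in the two outer ones. Throughout I write $B_t=\{g\in G:\;|g|_S\le t\}$ for the $t$-ball; since $S$ is symmetric and generating, the $1$-skeleton $R_t(G)^1$ is the Cayley graph of $G$ with respect to $B_t$, which is connected for $t\ge 1$, and a combinatorial loop in it based at $1$ records a sequence of jumps $g_i\in B_t$ with $g_1\cdots g_n=1$ in $G$. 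A $2$-simplex $\{x,y,z\}$ of $R_t(G)$ is exactly a triple whose three jumps $x^{-1}y,\,y^{-1}z,\,x^{-1}z$ all lie in $B_t$. I will also use freely that, by the lemma comparing bounded presentation for generating subsets (applied to $S$ and $B_t$, using $S\subseteq B_t\subseteq S^{2t}$), $G$ is boundedly presented by $S$ if and only if it is boundedly presented by $B_t$.

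For bounded presentation $\Rightarrow$ simple connectedness, I fix a presentation of $G$ with generators $S$ and relators of length $\le L$, and take any $t\ge L$. Given a combinatorial loop $\ell$ in $R_t(G)$ based at $1$, I first refine it into a loop using only $S$-edges (unit steps): each jump $g_i\in B_t$ is replaced by a shortest $S$-word for it, and each replacement is a combinatorial homotopy because the successive partial products are pairwise within distance $\le t$, hence span a single simplex. Reading the refined loop gives a word $w$ in $S$ with $w=1$ in $G$, so in the free group on $S$ one may write $w=\prod_k u_k r_k^{\pm1}u_k^{-1}$ with each $r_k$ a relator of length $\le L$. Each relator loop $r_k$ has all its vertices pairwise within distance $\le L\le t$, so it lies in one simplex of $R_t(G)$ and is null-homotopic; free cancellations correspond to backtracks (graph-homotopies) and conjugation by $u_k$ is harmless. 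Hence the reduction of $w$ to the empty word translates step by step into a null-homotopy of the refined loop, so $\ell$ is homotopically trivial. As $\ell$ was arbitrary and the complex is connected, $R_t(G)_{\textnormal{top}}$ is simply connected for every $t\ge L$.

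For simple connectedness for some $t$ $\Rightarrow$ bounded presentation, I set $T=B_t$ and let $\tilde G=\langle T\mid ab=c\ (a,b,c\in T,\ ab=_G c)\rangle$, with canonical surjection $p:\tilde G\to G$; proving $p$ injective shows $T$ is a defining generating subset, hence (length-$3$ relations) that $G$ is boundedly presented by $T$, and therefore by $S$. So take $u_1\cdots u_m=1$ in $G$ with $u_i\in T$ and consider the loop $\ell=(1,u_1,u_1u_2,\dots,1)$ in $R_t(G)^1$. By hypothesis $\ell$ is null-homotopic, so by Lemma~\ref{lem:graphomotopic_simplices} it is graph-homotopic to a product $\prod_j y_jr_jy_j^{-1}$ of conjugates of triangular loops $r_j=(z_j,z_j',z_j'',z_j)$ whose three vertices lie in a common simplex. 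The word read from such a triangle is $abe$ with $a,b,e\in T$ and $ab=e^{-1}\in T$, which is trivial in $\tilde G$ by the defining relations $ab=e^{-1}$ and $e^{-1}e=1$; the conjugating paths contribute $w_j(\ \cdot\ )w_j^{-1}$ and cancel; and each elementary graph-homotopy inserts a pair $a\,a^{-1}$ that is trivial in $\tilde G$ by the relation $a\cdot a^{-1}=1$. Thus the map sending a combinatorial loop to the element of $\tilde G$ it spells is invariant under graph-homotopy, so $u_1\cdots u_m$ equals in $\tilde G$ the value of $\prod_j y_jr_jy_j^{-1}$, namely $1$. Hence $p$ is injective, closing the cycle.

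The main obstacle is the faithful dictionary between the algebra of the free group (expressing a word as a product of conjugates of relators, and conversely) and the combinatorial homotopies of the Rips complex supplied by Lemma~\ref{lem:graphomotopic_simplices}. The crucial input that makes both translations work is the size condition on $t$: choosing $t\ge L$ confines every relator loop, and the condition that $T=B_t$ confines every elementary triangle, to a single simplex, so that each is contractible and corresponds to exactly one defining relation $ab=c$.
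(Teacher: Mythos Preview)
Your proof is correct and follows essentially the same approach as the paper: in one direction you invoke Lemma~\ref{lem:graphomotopic_simplices} to turn a null-homotopy into a product of conjugates of triangular relators (hence a defining presentation on $B_t$), and in the other you show each short relator loop sits inside a single simplex. The only cosmetic differences are that you take $t\ge L$ where the paper uses the sharper $t=\lfloor m/2\rfloor$, and that you spell out explicitly the refinement of a $B_t$-loop into an $S$-loop via simplex moves, a step the paper handles more implicitly.
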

\begin{proof} Suppose that $R_m(G)_{\textnormal{top}}$ is simply connected.
Since the distance is integer-valued, we can suppose that $m$ is
integer (and $m\ge 1$: if $m=0$ the only possibility is
$G=\{1\}$). We endow $G$ with the generating subset $S^m$, so that
its Cayley graph coincides with the 1-skeleton of
$R_m(G)_{\textnormal{top}}$. Replacing $S$ by $S^m$ if necessary,
we assume that $m=1$.

Let $s_1,\dots,s_m$ belong to $S$, such that $s_1\dots s_n=1$.
Consider the combinatorial path $(1,s_1,s_1s_2,\dots,s_1\dots
s_{n-1},1)$: the corresponding path in $R_1(G)_{\textnormal{top}}$
is homotopically trivial. It follows from Lemma
\ref{lem:graphomotopic_simplices} that, in the free group
generated by $S$ (only subject to the relations of the form
$ss^{-1}=1$), $s_1\dots s_n=\prod_{i=1}^kg_ir_ig_{i}^{-1}$ for
some elements $g_i$, and some $r_i$ such that, if we write
$r_i=m_{i1}m_{i2}m_{i3}$ with $m_{ij}\in S$, then $r_i=1$ in $G$.
Therefore, we obtain that $S$ is a defining subset of $G$.

\medskip

Conversely, suppose that $G$ is boundedly presented by $S$, by
relations of length $\le m$. Let us show that $R_{\lfloor
m/2\rfloor}(G)$ is simply connected. Consider a path $\gamma$ in
$R_{\lfloor m/2\rfloor}(G)_{\textnormal{top}}$, based at 1, and
let us show that it is homotopically trivial. By Lemma
\ref{lem:path_X1}, we can suppose that $\gamma$ is a combinatorial
path $$(1,s_1,s_1s_2,\dots,s_1s_2\dots s_{n-1},s_1s_2\dots
s_{n-1}s_n=1).$$ By the presentation of $G$, we can write, in the
free group generated by $S$ (only subject to the relations of the
form $ss^{-1}=1$), $s_1\dots s_n=\prod_i g_ir_ig_i^{-1}$, where
$r_i$ has length $\le m$. To each element $u=u_1\dots u_k$ of the
semigroup freely generated by $S$, corresponds a combinatorial
path $[u]=(1,u_1,u_1u_2,\dots u_1\dots u_k)$. Then $[s_1\dots
s_n]$ and $[\prod_i g_ir_ig_i^{-1}]$ are graph-homotopic: this is
a trivial consequence of the solution of the word problem in a
free group. Now observe that, clearly, if two combinatorial based
loops are graph-homotopic, then they are homotopic. Thus we are
reduced to prove that $[\prod_i g_ir_ig_i^{-1}]$ is homotopically
trivial. Clearly, this reduced to showing that each
$[g_ir_ig_i^{-1}]$ is homotopically trivial. But this is freely
homotopic to $[r_i]$, and this combinatorial loop, being of length
$\le m$, has all its vertices at distance $\le\lfloor m/2\rfloor$.
Hence it lies in the boundary of a simplex of $R_{\lfloor
m/2\rfloor}(G)$, and thus is homotopically trivial.\end{proof}

\medskip

Let $X$ be a metric space. A $r$-path in $X$ is a sequence
$(x_0,x_1,\dots,x_n)$ such that $d(x_i,x_{i+1})\le r$ for all $i$;
it is said to join $x_0$ and $x_n$. If $x_0=x_n$, it is called a
$r$-loop based on $x_0$.

We introduce the $r$-equivalence between $r$-paths: this is the
equivalence relation generated by
$(x_0,\dots,x_n)\sim(x_0,\dots,x_i,y,x_{i+1},x_n)$.

\begin{defn}
The metric space $X$ is called coarsely connected if, for some
$r$, every two points in $X$ are joined by a $r$-path.

The metric space $X$ is called coarsely simply connected if it is
coarsely connected, and, for every $r$, there exists $r'\ge r$
such that every $r$-loop is $r'$-homotopic to the trivial loop.
\end{defn}

\begin{rem}
Consider a geodesic circle $C_R$ of length $R$. Let a $r$-path
$\gamma$ go round $C_R$. Then the reader can check that $\gamma$
is $r$-homotopic to the trivial path if and only if $R\le 3r$. In
particular, a bunch of circles of radius tending to $\infty$ if
not coarsely simply connected. Removing one point (other than the
base-point) in each of these circles, we obtain a simply connected
(and even contractible) metric space which is not coarsely simply
connected.
\end{rem}

\begin{prop}
Let $X$ be a metric space. The following conditions are equivalent:

(1) $X$ is coarsely simply connected,

(2)  For some $t_0$, $R_{t_0}(X)$ is connected, and for every $t$,
there exists $t'\ge t$ such that every loop in $R_t(X)$ is
homotopically trivial in $R_{t'}(X)$.

(3) For some $r$, $R_r(X)$ is simply connected.\label{coarips}
\end{prop}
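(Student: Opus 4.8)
The plan is to build a single dictionary between the coarse (combinatorial) homotopy of loops in $X$ and genuine homotopy in the Rips complexes, and then read off all three equivalences from it. The dictionary I would isolate first is the following scale-matched statement: for a combinatorial loop $\gamma$ in the $1$-skeleton of $R_t(X)$ (equivalently, a $t$-loop in $X$), $\gamma$ is null-homotopic in $R_t(X)_{\textnormal{top}}$ if and only if $\gamma$ is $t$-homotopic to the trivial loop. For the converse direction I would note that a single $t$-homotopy move --- inserting a point $y$ between consecutive $x_i,x_{i+1}$ of a $t$-loop --- only involves the three points $x_i,y,x_{i+1}$, which are pairwise at distance $\le t$ and hence span a $2$-simplex of $R_t(X)$; the move is thus a combinatorial homotopy across that simplex, so $t$-homotopic loops are homotopic in $R_t(X)_{\textnormal{top}}$. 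For the forward direction I would use Lemma~\ref{lem:path_X1} to reduce a topological loop to a combinatorial one and then Lemma~\ref{lem:graphomotopic_simplices} to write a null-homotopic $\gamma$, up to graph-homotopy, as a product $\prod_i g_i r_i g_i^{-1}$ in which each $r_i$ is a triangle whose vertices lie in a common simplex of $R_t(X)$, i.e. are pairwise within $t$. The point is then that every operation in sight is already a $t$-homotopy: inserting a backtrack is a composition of two $t$-insertions, each triangle $r_i=(z,z',z'',z)$ contracts by deleting $z''$ (its neighbours $z',z$ satisfy $d(z',z)\le t$) and then $z'$, and a conjugate $g_i r_i g_i^{-1}$ of a $t$-trivial loop is $t$-homotopic to $g_ig_i^{-1}$ and hence to the trivial loop. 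Applying this at scale $t'$ to a $t$-loop ($t\le t'$) gives the companion statement: a $t$-loop is $t'$-homotopic to the trivial loop if and only if it is null-homotopic in $R_{t'}(X)_{\textnormal{top}}$.

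With this dictionary the equivalence (1) $\Leftrightarrow$ (2) should become a mere rephrasing. Indeed, ``$R_{t_0}(X)$ is connected'' says exactly that any two points are joined by a $t_0$-path, i.e. that $X$ is coarsely connected; and, using Lemma~\ref{lem:path_X1} to replace a loop of $R_t(X)$ by a combinatorial $t$-loop and then the companion statement above, ``every loop in $R_t(X)$ is homotopically trivial in $R_{t'}(X)$'' is literally ``every $t$-loop is $t'$-homotopic to the trivial loop''. Thus (2) unwinds term by term into the definition of coarse simple connectedness, and both implications are immediate.

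It remains to bring in (3), and this is where I expect the real work to lie. One direction --- that $R_r(X)$ simply connected forces coarse simple connectedness --- is easy at scales $\le r$ (a $t$-loop with $t\le r$ is null-homotopic in $R_r(X)_{\textnormal{top}}$, hence $r$-homotopic to the trivial loop by the dictionary), so the crux is the passage between scales: one needs that simple connectedness of a single Rips complex propagates to all larger parameters, $R_r(X)$ simply connected $\Rightarrow$ $R_{r'}(X)$ simply connected for $r'\ge r$, and, conversely, that coarse simple connectedness produces one honest simply connected $R_\rho(X)$ rather than merely the pro-triviality recorded in (2). The main obstacle is exactly this monotonicity/stabilization in the Rips parameter: pro-triviality of the system $\{\pi_1(R_t(X)_{\textnormal{top}})\}$ does not in general collapse to triviality of a single term, so one must use the geometry. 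I would argue it through the principle that simple connectedness of $R_r(X)$ is equivalent to a uniform bound (of order $r$) on the size of the ``holes'' a loop can enclose: coarse connectedness ties $X$ together at one fixed scale $a$, and a hole large enough to carry an essential loop must either have an $a$-fine boundary --- making an essential loop visible already at scale $a$, so that coarse simple connectedness caps its size by the killing radius of scale-$a$ loops --- or be filled by the fine structure joining it to the rest of $X$. Hence holes are uniformly bounded and $R_\rho(X)$ is simply connected for all large $\rho$. For the spaces of real interest here, namely Cayley graphs and other length spaces, this step is transparent: any two points at distance $\le t$ are joined by a geodesic $t$-path of diameter $\le t$, so subdividing the edges of a loop introduces only controlled detours and the resulting ``lollipops'' contract at a scale depending on $t$ alone; the delicate point is solely the passage to general metric spaces, where detours are a priori unbounded.
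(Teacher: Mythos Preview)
Your dictionary and the resulting proof of (1)$\Leftrightarrow$(2) coincide with the paper's. For the link with (3), however, your plan has a genuine gap. The paper does not argue through any monotonicity of simple connectedness in the Rips parameter, nor through a hole-size principle; its mechanism is a single \emph{subdivision step}: if $X$ is $s$-connected and $u\ge s$, then (the paper asserts) every $u$-loop is $u$-equivalent to an $s$-loop, obtained by interpolating an $s$-path between each pair of consecutive vertices. Granting that step, (2)$\Rightarrow$(3) is immediate (take $s=t_0$, $u=t'$: refine an arbitrary $t'$-loop to a $t_0$-loop, which by (2) already dies in $R_{t'}$), and so is (3)$\Rightarrow$(1) (take $s=r$, $u=\max(r,t)$). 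You never locate this mechanism; the monotonicity you propose instead, ``$R_r(X)$ simply connected $\Rightarrow R_{r'}(X)$ simply connected for $r'\ge r$'', is a different assertion and is in fact false.

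Your closing instinct, though, is sharper than the paper's own treatment: for arbitrary metric spaces the subdivision step itself is \emph{not} valid, and with it the implication (3)$\Rightarrow$(1) actually fails. For $n\ge 4$ let $T_n$ be a combinatorial $(n{+}2)$-cycle with one vertex deleted, so that two vertices $a_n,b_n$ have $d(a_n,b_n)=2$ while the only route between them inside $T_n$ is the remaining arc of length $n$; wedge the $T_n$ along the $a_n$ (joining $a_n$ to $a_{n+1}$ by a unit edge) to form $X$. Then $R_1(X)$ is a tree, so (3) holds with $r=1$; yet the $2$-loop that jumps from $a_n$ to $b_n$ and returns along the long arc is essential in $R_{t'}(X)$ for every $t'<(n{+}2)/3$, so no single $t'$ kills all $2$-loops and (1) fails. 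In particular $R_1(X)$ is simply connected while $R_2(X)$ is not, refuting your monotonicity as well. Hence neither your heuristic nor the paper's one-line subdivision can be rescued in this generality. For quasi-geodesic spaces --- in particular word metrics on groups, which is all the paper ever applies this to --- one can choose the interpolating $s$-path to stay inside the $u$-ball around either endpoint, the subdivision then becomes a sequence of genuine $u$-moves, and your last paragraph is exactly the correct argument.
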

\begin{proof} (1)$\Rightarrow$(2) Suppose that $X$ is coarsely simply
connected. Clearly, $R_{t_0}(X)$ is connected for some $t_0$.
Consider a loop in $R_r(X)$. By Lemma \ref{lem:path_X1}, it is
homotopic to a combinatorial loop, defining a $r$-path on $X$. For
some $r'\ge r$, this loop is $r'$-equivalent to the trivial loop.
Thus the loop is homotopically trivial in $R_{r'}(X)$.

(2)$\Rightarrow$(1) Suppose that (2) is satisfied. Then $X$ is
$t_0$-connected. Fix $t\ge t_0$, and take $t'$ as in (2). Consider
a $t$-loop in $X$. Then the corresponding loop in $R_{t'}(X)$ is
homotopically trivial. By Lemma \ref{lem:homot_comb_homot}, we
obtain that it is $t'$-equivalent to the trivial loop.

(3)$\Rightarrow$(2). Consider a $t$-loop in $X$. Set
$r'=\max(r,t)$. Since $X$ is $r$-connected, this loop is
$r'$-equivalent to a $r$-loop, defining a combinatorial loop on
$R_r(X)$. Since $R_r(X)$ is simply connected and using Lemma
\ref{lem:homot_comb_homot}, this $r$-loop is $r$-equivalent to the
trivial loop. Thus our original loop is $r'$-equivalent to the
trivial loop.

(2)$\Rightarrow$(3). Fix $t\ge t_0$, and $t'$ as in (2). First
note that $R_{t'}(X)$ is pathwise connected. Consider a loop in
$R_{t'}(X)$. It is homotopic to a combinatorial loop. Since $X$ is
$t$-connected, it is $t'$-equivalent to a $t$-loop. This one is,
by assumption, homotopically trivial in $R_{t'}(X)$. Thus
$R_{t'}(X)$ is simply connected.\end{proof}

From Lemma \ref{ripsqi}, Proposition \ref{bpresrips} and Proposition \ref{coarips}, we get

\begin{cor}
Let $G$ be a group generated by a subset $S$, endowed with the word metric. Then $G$ is boundedly presented by $S$ if and only $G$ is coarsely simply connected. In particular, a locally compact compactly generated group is compactly presented if and only if it is coarsely simply connected.
\end{cor}

\medskip


\medskip

\begin{prop}Being coarsely simply connected is a quasi-isometry
invariant.
\end{prop}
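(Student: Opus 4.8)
The plan is to fix a quasi-isometry $f\colon X\to Y$ together with a quasi-inverse $g\colon Y\to X$, so that there is a constant $D$ with $d(f(g(y)),y)\le D$ for all $y\in Y$; such a $g$ always exists. The two elementary facts I would record first are that a $(\lambda,C)$-quasi-isometry sends $r$-paths to $(\lambda r+C)$-paths, and, more to the point, sends $r$-homotopies to $(\lambda r+C)$-homotopies: an elementary insertion producing $(x_0,\dots,x_i,y,x_{i+1},\dots,x_n)$ maps under $f$ to the insertion of $f(y)$ between $f(x_i)$ and $f(x_{i+1})$, and the bounds $d(x_i,y),d(y,x_{i+1})\le r$ become $\le\lambda r+C$. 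Coarse connectedness of $Y$ is then immediate: an $r$-path in $X$ joining $g(y)$ to $g(y')$ pushes forward to a $(\lambda r+C)$-path joining $f(g(y))$ to $f(g(y'))$, and appending the two edges of length $\le D$ joins $y$ to $y'$.

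For the main point, suppose $X$ is coarsely simply connected and fix $r$. Given an $r$-loop $\ell=(y_0,\dots,y_n=y_0)$ in $Y$, I push it forward by $g$ to an $s$-loop $(g(y_0),\dots,g(y_n))$ in $X$, with $s$ depending only on $r$ and the constants of $g$. By coarse simple connectedness of $X$ there is $s'\ge s$, depending only on $s$, such that this loop is $s'$-homotopic to the trivial loop in $X$. Applying $f$ and writing $z_i=f(g(y_i))$, the loop $(z_0,\dots,z_n)$ is $s''$-homotopic to the trivial loop in $Y$, with $s''$ depending only on $s'$. It remains to transfer this nullhomotopy from $(z_i)$ back to $\ell$, using that $d(y_i,z_i)\le D$ for every $i$.

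The technical heart is this fellow-traveller step. Set $R=\max(s'',D+r)$. I would show that $\ell$ is $R$-homotopic to the loop $M=(y_0,z_0,z_1,\dots,z_{n-1},z_0,y_0)$ by an explicit interpolation whose $k$-th stage is the $R$-loop $(y_0,z_0,\dots,z_k,y_{k+1},\dots,y_n)$: passing from stage $k$ to stage $k+1$ one inserts $z_{k+1}$ between $z_k$ and $y_{k+1}$ (legal since $d(z_k,z_{k+1})\le s''$ and $d(z_{k+1},y_{k+1})\le D$), then deletes $y_{k+1}$ (legal since $d(z_{k+1},y_{k+2})\le D+r$). Because $z_0$ stays fixed throughout, the $s''$-nullhomotopy of the inner loop $(z_0,\dots,z_{n-1},z_0)$ can be carried out inside $M$, turning $M$ into $(y_0,z_0,y_0)$, which collapses to the trivial loop at $y_0$ by deleting $z_0$. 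Hence $\ell$ is $r'$-null-homotopic with $r'=\max(R,s'')$, a quantity depending only on $r$ together with the fixed constants $\lambda,C,D$; this is exactly the condition for $Y$ to be coarsely simply connected.

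I expect the only genuine obstacle to be the bookkeeping of constants, namely making sure $r'$ ends up a function of $r$ alone and that \emph{every} move in the interpolation is a legitimate $R$-move, i.e. that the path both before and after each insertion or deletion is an $R$-path. The underlying geometric idea, that a nullhomotopy can be dragged across a uniformly bounded band of thickness $D$, is straightforward once the elementary moves are arranged as above.
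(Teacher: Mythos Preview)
Your proposal is correct and follows essentially the same route as the paper's proof: push a loop across with $g$, null-homotope it in $X$, push back with $f$, and then use the uniform bound $d(y_i,z_i)\le D$ to drag the null-homotopy from $(z_i)$ to $(y_i)$. The paper compresses your explicit stage-$k$ interpolation into the single sentence that $(x_0,\dots,x_n)$ is $\rho'$-equivalent to $(g\circ f(x_0),\dots,g\circ f(x_n))$ with $\rho'=C+\max(\alpha R+\beta,\rho)$; your more detailed bookkeeping (and the observation that the basepoint $z_0$ stays fixed so the inner null-homotopy can be run inside $M$) is exactly what justifies that sentence.
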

\begin{proof} Consider metric spaces $X,Y$, and functions $f:X\to Y$,
$g:Y\to X$ such that
$$\forall x,x'\in X,\;d(f(x),f(x'))\le Ad(x,x')+B\,;$$
$$\forall y,y'\in X,\;d(f(y),f(y'))\le \alpha d(y,y')+\beta\,;$$
$$\forall x\in X,\; d(g\circ f(x),x)\le C\,;\quad \forall y\in X,\; d(f\circ g(y),y)\le
\gamma.$$ Suppose that $Y$ is coarsely simply connected.

Then $Y$ is $r$-connected for some $r$. If $x,x'\in X$, then there
exists a $r$-path $(f(x)=y_0,\dots,y_n=f(x'))$. Set
$r'=\max(C,\alpha r+\beta)$. Then $(x,g\circ
f(x)=g(y_0),\dots,g(y_n)=g\circ f(x'),x')$ is a $r'$-path joining
$x$ and $x'$, so that $X$ is $r'$-connected.

Consider now a $\rho$-loop $x_0,\dots,x_n=x_0$ in $X$. Then
$f(x_0),\dots,f(x_n)$ is a $(A\rho+B)$-loop in $Y$. Since $Y$ is
coarsely simply connected, there exists $R\ge A\rho+B$ (depending
only on $A\rho+B$ and not on the loop) such that
$f(x_0),\dots,f(x_n)$ is $R$-equivalent to the trivial loop. Thus
$g\circ f(x_0)\dots g\circ f(x_n)$ is $(\alpha
R+\beta)$-equivalent to the trivial loop. Setting
$\rho'=C+\max(\alpha R+\beta,\rho)$, we obtain that
$(x_0,\dots,x_n)$ is $\rho'$-equivalent to $g\circ f(x_0)\dots
g\circ f(x_n)$,
hence is $\rho'$-equivalent to the trivial loop.\end{proof}





\begin{cor}
Among locally compact compactly generated groups, being compactly presented is a quasi-isometry invariant.\label{qinv}
\end{cor}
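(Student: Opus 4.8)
The plan is to chain together the two results immediately preceding this corollary, so that essentially no new work is required. First I would recall that a locally compact compactly generated group $G$ becomes a metric space once we fix a compact generating subset $S$ and equip $G$ with the associated word length. The key preliminary point—and really the only thing needing care—is that this metric is well-defined up to quasi-isometry, independently of the choice of $S$. If $S_1$ and $S_2$ are two compact generating subsets, then by the lemma asserting that every compact subset of $G$ lies in $S^n$ for large $n$, we have $S_1\subset S_2^n$ and $S_2\subset S_1^m$ for suitable $m,n$; hence the two word metrics are bi-Lipschitz equivalent, and in particular quasi-isometric. Thus the quasi-isometry class of $G$ is intrinsic, and it is this class to which the statement implicitly refers.

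With this in hand, the argument is a two-step deduction. By the corollary just established—the equivalence of bounded presentability and coarse simple connectedness, specialized via Lemma \ref{ripsqi} to the locally compact compactly generated setting—a group $G$ of this kind is compactly presented if and only if it is coarsely simply connected as a metric space for its word metric. Now suppose $G$ and $H$ are locally compact compactly generated groups that are quasi-isometric, and suppose $G$ is compactly presented. Then $G$ is coarsely simply connected; by the preceding proposition (coarse simple connectedness is a quasi-isometry invariant), $H$ is coarsely simply connected; and applying the equivalence once more, now in the reverse direction, $H$ is compactly presented. Since the roles of $G$ and $H$ are symmetric, this gives the claimed invariance.

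I expect no genuine obstacle here, since the substantive content has already been isolated in the two cited results. The single point demanding attention is the verification that the word metric of a locally compact compactly generated group is canonical up to quasi-isometry; this is what licenses us to speak of \emph{the} quasi-isometry type of such a group and to transport coarse simple connectedness along a quasi-isometry between $G$ and $H$. The rest is purely formal.
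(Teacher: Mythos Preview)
Your proposal is correct and follows exactly the paper's approach: the corollary is stated without proof precisely because it is the immediate combination of the preceding corollary (compactly presented $\Leftrightarrow$ coarsely simply connected) with the proposition that coarse simple connectedness is a quasi-isometry invariant. Your added remark that the word metric is well-defined up to quasi-isometry is a welcome clarification, but the paper treats this as understood.
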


Given a metric space $X$ and a path $\gamma:[0,1]\to X$, its
diameter is defined as $\sup_{u,v\in [0,1]}d(\gamma(u)-\gamma(v)$.
We call a metric space weakly geodesic if there exists a function
$w:\R_+\to\R_+$ such that, whenever $x,y\in X$, they are joined by
a path of diameter $\le w(d(x,y))$.

\begin{prop}
Let $X$ be a weakly geodesic metric space. If $X$ is simply
connected, then it is coarsely simply
connected.\label{prop:simplyC_coarse_simplyC}
\end{prop}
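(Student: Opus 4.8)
The plan is to verify directly the definition of coarse simple connectedness, the delicate point being uniformity. Coarse connectedness is immediate: since $X$ is weakly geodesic it is path-connected, and any continuous path $[0,1]\to X$ can, by continuity and compactness of $[0,1]$, be sampled into a $1$-path; hence any two points are joined by a $1$-path. It then remains to show that for each $r$ there is an $r'\ge r$, \emph{depending only on $r$}, such that every $r$-loop is $r'$-homotopic to the trivial loop.

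So I fix an $r$-loop $c=(x_0,x_1,\dots,x_n=x_0)$. For each $i$, the weakly geodesic hypothesis provides a continuous path $\gamma_i$ from $x_i$ to $x_{i+1}$ of diameter $\le w(r)$ (we may assume $w$ nondecreasing). I concatenate these into a continuous loop $\gamma\colon[0,1]\to X$ based at $x_0$, parametrized so that $\gamma(i/n)=x_i$ and $\gamma$ agrees with $\gamma_i$ on $[i/n,(i+1)/n]$. Since $X$ is simply connected, $\gamma$ bounds a homotopy $H\colon[0,1]^2\to X$ with $H(\cdot,0)=\gamma$, $H(\cdot,1)\equiv x_0$, and $H(0,\cdot)=H(1,\cdot)\equiv x_0$. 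Here $H$ is merely some null-homotopy; its ``size'' will not enter $r'$, which is what makes the argument work.

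Next I exploit that $H$ is uniformly continuous. I choose an $N\times N$ grid on $[0,1]^2$, with $N$ a multiple of $n$, fine enough that each closed grid square has image of diameter $\le 1$, and write $p_{ij}=H(i/N,j/N)$. Each row $L_j=(p_{0j},\dots,p_{Nj})$ is then a $1$-loop based at $x_0$ (as $H$ is constant $x_0$ on the vertical edges), the top row $L_N$ is the trivial loop, and $L_0$ samples $\gamma$. The key point is that $L_j$ is $1$-homotopic to $L_{j+1}$: pushing across the strip one square at a time, the ``staircase'' loops $M_i=(p_{0,j+1},\dots,p_{i,j+1},p_{i,j},p_{i+1,j},\dots,p_{N,j})$ interpolate between $M_0=L_j$ and $M_N=L_{j+1}$ (after deleting a repeated $x_0$), and each step $M_i\to M_{i+1}$ replaces the subpath $(p_{i,j+1},p_{i,j},p_{i+1,j})$ by $(p_{i,j+1},p_{i+1,j+1},p_{i+1,j})$, a composition of two elementary $1$-moves since all four corners of that square lie within distance $1$ of one another. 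Chaining $L_0,L_1,\dots,L_N$ shows $L_0$ is $1$-homotopic to the trivial loop.

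Finally I compare $L_0$ with the original loop $c$. Because $N$ is a multiple of $n$, the bottom grid points include all the $x_i=\gamma(i/n)$, so $L_0$ is obtained from $c$ by inserting, between each $x_i$ and $x_{i+1}$, sample points lying on $\gamma_i$; every such point is within $w(r)$ of both endpoints since $\gamma_i$ has diameter $\le w(r)$. Hence these insertions are elementary $w(r)$-moves, and $c$ is $r_1$-homotopic to $L_0$ with $r_1=\max(r,w(r),1)$. Combining with the previous paragraph, $c$ is $r_1$-homotopic to the trivial loop, with $r_1$ depending only on $r$; this is exactly the required statement. The one point demanding care---and the reason the weakly geodesic hypothesis is indispensable---is precisely this uniformity: the diameter bound $w(r)$ keeps the comparison between $c$ and its refinement $L_0$ within a radius depending only on $r$ and not on the loop, while the grid moves are automatically of radius $1$. (One could equally package the argument through Proposition \ref{coarips}, showing instead that $R_r(X)$ is simply connected for suitable $r$.)
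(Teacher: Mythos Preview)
Your proof is correct and follows essentially the same route as the paper's: interpolate the $r$-loop into a continuous loop using the weakly geodesic hypothesis, null-homotope it, discretize the homotopy on a fine grid via uniform continuity, and push across the grid row by row. The only cosmetic difference is that you take the grid fine enough for the image of each square to have diameter $\le 1$, whereas the paper uses diameter $\le R=w(r)$; both yield $r'=\max(r,w(r))$ up to a constant.
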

\begin{proof} We can suppose that $w(r)\ge r$ for all $r$.

Clearly, $X$ is path-connected. Fix $r$, and consider a $r$-loop
$(x_0,x_1,\dots,x_n=x_0)$ in $X$. Interpolate it to obtain a path
$\gamma:[0,n]\to X$ such that $\gamma(i)=x_i$ for all
$i=0,\dots,n$, and such that, for all $i\le t,t'\le i+1$, we have
$d(\gamma(t),\gamma(t'))\le R=w(r)$.


The path $\gamma$ is homotopically trivial. Thus there is a
continuous function $h:[0,1]^2\to X$ such that, for all $t$,
$h(t,0)=\gamma(t)$ and $h(0,t)=h(1,t)=h(t,1)=x_0=x_n$ for all $t$.

Since $h$ is uniformly continuous, there exists $\eta$ such that,
for all $x,y,x',y'$, if $\max(|x-x'|,|y-y'|)\le\eta$, then
$d(h(x,y),h(x',y'))\le R$.

Using the assumption on the diameter, the $R$-path
$(x_0,x_1,\dots,x_n)$ is $R$-homotopic to a path of the form
$(\gamma(t_0),\dots,\gamma(t_m)$, with $0=t_0\le t_1\le\dots \le
t_n=1$, and $t_{j+1}-t_j\le\eta$ for all $j$.

Now
\begin{align*}
    (\gamma(t_0),\dots,\gamma(t_m)=(h(t_0,t_0),h(t_1,t_0),\dots,h(t_n,t_0))\\
    =(h(t_0,t_1),h(t_1,t_0),\dots,h(t_n,t_0))
\end{align*}
is $R$-equivalent to
$(h(t_0,t_1),h(t_1,t_1),h(t_2,t_0)\dots,h(t_n,t_0))$ (passing
through
$$(h(t_0,t_1),h(t_1,t_1),h(t_1,t_0),h(t_2,t_0)\dots,h(t_n,t_0))\,).$$
Iterating in a similar way, it is $R$-equivalent to
$(h(t_0,t_1),h(t_1,t_1),\dots,h(t_n,t_1))$. Iterating all the
process, we obtain that it is $R$-equivalent to
$$(h(t_0,t_1),h(t_1,t_1),\dots,h(t_n,t_1))=(x_0,x_0,\dots,x_0),$$
which is $R$-equivalent to the trivial loop.\end{proof}

\subsection{Coverings}

\begin{defn}
Let $X\to Y$ be a map between metric spaces. We call it a
$r$-metric covering if, for every $y\in Y$, such that the closed
$r$-balls $B'(x,r)$, $x\in f^{-1}(y)$ are pairwise disjoint, and
that, for every $x,x'\in X$ such that $d(x,x')\le n$, we have
$d(x,x')=d(f(x),f(x'))$ (in particular, $f$ is an isometry in
restriction to $(n/2)$-balls).
\end{defn}


\begin{prop}
Let $Y$ be $(r,R)$-simply connected. Let $X$ be another metric
space and $f:X\to Y$ be a $R$-covering. Then $f$ is
injective.\label{prop:covering_injective}
\end{prop}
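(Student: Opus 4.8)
The plan is to prove this by the coarse analogue of the classical fact that a covering of a simply connected space is trivial, hence injective; concretely, I would reduce the statement to showing that any two points lying in a common fibre coincide. So suppose $f(x_1)=f(x_2)=y_0$. Assuming, as is needed for injectivity to hold at all, that $X$ is coarsely connected at the relevant scale (otherwise distinct points of one fibre could sit in different coarse components), I would join $x_1$ to $x_2$ by an $r$-path $(x_1=a_0,a_1,\dots,a_k=x_2)$ in $X$. Since $f$ preserves distances between points at distance $\le R$ and $r\le R$, the image $(y_0,f(a_1),\dots,f(a_{k-1}),y_0)$ is an $r$-loop in $Y$ based at $y_0$. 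Because $Y$ is $(r,R)$-simply connected, this loop is $R$-homotopic to the constant loop $(y_0)$.

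The first tool I would set up is \emph{unique lifting of $R$-paths}: given a point $x$ in the fibre over the initial vertex of an $R$-path in $Y$, there is one and only one $R$-path in $X$ starting at $x$ and projecting to it. Existence comes from the local surjectivity of the covering (each $x$ sees an isometric copy of the $R$-ball around $f(x)$), and uniqueness comes from the disjointness of the closed $R$-balls over a common point: two candidate lifts of the same vertex that are both close to a common predecessor must coincide, since otherwise their $R$-balls would meet. I would then observe that the chosen path $(a_0,\dots,a_k)$ is itself the lift of its own projection starting at $x_1$, so that lift ends at $x_2$, whereas the lift of the constant loop starting at $x_1$ trivially ends at $x_1$.

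The heart of the argument is then \emph{invariance of the endpoint of the lift under $R$-homotopy}. It suffices to treat a single elementary move, namely the insertion (and symmetrically the deletion) of one vertex $z$ between consecutive vertices $y_i,y_{i+1}$ of an $R$-loop. I would lift $z$ through the already-constructed lift $x_i$ of $y_i$, obtaining $\tilde z$ with $f(\tilde z)=z$ and $d(x_i,\tilde z)\le R$, and then argue by uniqueness that the continuation of the lift past $z$ is unchanged, so that the endpoint is preserved. Applying this across the finitely many moves of the $R$-homotopy carrying our loop to the constant loop, I conclude that the lift of our loop and the lift of the constant loop, both started at $x_1$, share the same endpoint; by the previous paragraph this reads $x_2=x_1$, which is the desired injectivity.

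The step I expect to be the main obstacle is the scale bookkeeping in the homotopy-invariance step. An elementary insertion produces, upstairs, a ``triangle'' $x_i,\tilde z,x_{i+1}$ whose sides are controlled only up to $2R$ or $3R$ by the triangle inequality, whereas to invoke uniqueness of lifts one needs distinct points of a fibre to be separated by more than this. Making this rigorous forces one to use the separation built into the definition of an $R$-covering with the correct margin relative to the homotopy scale, and to check that the disjointness of the fibre-balls is genuinely strong enough to force the two local lifts of $y_{i+1}$ to agree; this is the delicate point. Once the fibre separation is seen to dominate the width of an elementary move, the rest of the argument is formal.
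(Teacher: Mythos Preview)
Your approach is essentially identical to the paper's: take two points in a fibre, connect them by an $r$-path in $X$, project to an $r$-loop in $Y$, use $(r,R)$-simple connectedness to $R$-homotope it to the trivial loop, and lift the homotopy uniquely to conclude the endpoints coincide. The paper's proof is in fact far terser than yours---it simply asserts ``the homotopy lifts in a unique way to $X$'' without spelling out path-lifting or endpoint invariance---so your worry about the scale bookkeeping in the elementary-move step is a genuine subtlety that the paper's own proof does not address either.
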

\begin{proof} Suppose that $f(x)=f(x')=y_0$ for some $x_0,x_1\in X$.
Consider a $r$-path $(x=x_0,x_1,\dots,x_n=x')$ between $x$ and
$x'$. Write $y_i=f(x_i)$. Consider a $R$-combinatorial homotopy
between $(y_0,\dots,y_n=y_0)$ and the trivial loop $(y_0)$. Since
$f$ is a $R$-covering, the homotopy lifts in a unique way to $X$.
Thus $x=x'$.\end{proof}

\begin{prop}
Let $1\to N\to G\to Q\to 1$ be an exact sequence of locally
compact groups. Suppose that $G$ is compactly generated and that
$Q$ is compactly presented. Then $N$ is compactly generated as a
normal subgroup of $G$.\label{prop:ker_comp_pres_quot}
\end{prop}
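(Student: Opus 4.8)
The plan is to read off a compact normal generating set of $N$ directly from a bounded presentation of the quotient $Q$, viewing the word problem in $Q$ as the algebraic shadow of its coarse simple connectedness (cf. Proposition \ref{bpresrips}). Write $p:G\to Q$ for the projection, and fix a compact symmetric generating subset $S$ of $G$ with $1\in S$ (possible since $G$ is compactly generated). Its image $\bar S=p(S)$ is a compact generating subset of $Q$, and $Q$ is compactly generated (being a continuous image of $G$), so since $Q$ is compactly presented it is boundedly presented by $\bar S$, say with relators of length $\le m$. The candidate compact normal generating set will be $C=N\cap S^m$: it is compact because $S^m$ is compact (a continuous image of $S\times\dots\times S$) and $N=\ker p$ is closed in $G$.

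The heart of the argument is a transfer of the presentation from the alphabet $\bar S$ to the alphabet $S$. Let $F$ be the free group on the set $S$ and $\pi:F\to G$ the evaluation map, so that $p\circ\pi:F\to Q$ is onto and factors through the free group $F(\bar S)$ via the homomorphism induced by $p|_S:S\to\bar S$. Denoting by $\langle\!\langle X\rangle\!\rangle$ the normal closure of a subset $X$ in the ambient group, I claim that $\ker(p\circ\pi)=\langle\!\langle\Lambda\rangle\!\rangle$ in $F$, where $\Lambda$ is the set of words in $S$ of length $\le m$ that are sent to $1$ by $p\circ\pi$. The inclusion $\langle\!\langle\Lambda\rangle\!\rangle\subseteq\ker(p\circ\pi)$ is clear. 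For the reverse inclusion, take $W\in\ker(p\circ\pi)$; its image in $F(\bar S)$ lies in the normal closure of the length-$\le m$ relators of $Q$, hence is a product of conjugates of such relators. Lifting each relator letter by letter to a word of length $\le m$ in $S$ and lifting the conjugating elements arbitrarily yields $W'\in\langle\!\langle\Lambda\rangle\!\rangle$ with the same image as $W$ in $F(\bar S)$; then $W(W')^{-1}$ lies in $\ker\bigl(F\to F(\bar S)\bigr)$, which is normally generated by the length-$2$ words $s(s')^{-1}$ with $p(s)=p(s')$, all of which belong to $\Lambda$ (taking $m\ge 2$). Hence $W\in\langle\!\langle\Lambda\rangle\!\rangle$.

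With this claim in hand the conclusion is immediate. Given $w\in N$, choose $W\in F$ with $\pi(W)=w$; since $p(w)=1$ we have $W\in\ker(p\circ\pi)=\langle\!\langle\Lambda\rangle\!\rangle$, so $W=\prod_i G_i\,\lambda_i^{\pm1}\,G_i^{-1}$ with $\lambda_i\in\Lambda$ and $G_i\in F$. Applying $\pi$ gives $w=\prod_i \pi(G_i)\,\pi(\lambda_i)^{\pm1}\,\pi(G_i)^{-1}$ in $G$, and each $\pi(\lambda_i)$ lies in $N\cap S^m=C$ (it maps to $1$ in $Q$, hence lies in $N$, and is a product of at most $m$ elements of $S$, hence lies in $S^m$ since $1\in S$). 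Thus every element of $N$ is a product of $G$-conjugates of elements of $C\cup C^{-1}$, so $N=\langle\!\langle C\rangle\!\rangle$ in $G$ with $C$ compact, which is exactly the assertion. The step to get right is the alphabet change in the middle paragraph: one must verify that the relators of $Q$, written in $\bar S$, lift to bounded-length words of $G$ that land in $N$, and that the tautological relators identifying letters of $S$ with a common image are themselves short; the compactness of $C$ then follows from the compactness of $S^m$ together with the closedness of $N$.
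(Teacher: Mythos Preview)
Your argument is correct. It is, however, genuinely different from the paper's own proof. You work purely algebraically: you lift the bounded presentation of $Q$ by $\bar S$ back to the free group on $S$, identify $\ker(p\circ\pi)$ as the normal closure of the short words $\Lambda$, and then push this down via $\pi$ to exhibit $N$ as the normal closure in $G$ of the compact set $N\cap S^m$. This is essentially the classical word-combinatorial argument from the discrete (finitely presented) setting, transported verbatim to the locally compact case; it is self-contained and does not use any of the coarse-geometric machinery developed in Section~\ref{rips}. By contrast, the paper argues geometrically: it sets $N_n=\langle\!\langle N\cap B_n\rangle\!\rangle$, observes that the projection $G/N_{2n+1}\to G/N=Q$ is an $n$-metric covering in the sense of the paper's Definition, and then invokes the coarse simple connectedness of $Q$ (equivalent to compact presentability by Proposition~\ref{bpresrips} and Proposition~\ref{coarips}) together with Proposition~\ref{prop:covering_injective} to force this covering to be injective for large $n$, whence $N_n=N$. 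Your route is more elementary and avoids the Rips complex and the covering lemma entirely; the paper's route is more conceptual and illustrates why the result is ``really'' a statement about lifting coarse homotopies, in line with the geometric viewpoint advertised in the introduction.
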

\begin{proof} Let $S$ be a generating subset of $G$, whose interior
contains 1, and let $B_n$ denote the $n$-ball of $G$ relative to
$S$. Let $N_n$ be the normal subgroup of $G$ generated by $G\cap
B_n$. We must show that eventually $N=N_n$.

Endow $G/N_n$ with the generating subset $S_n$, defined as the
image of $S$ in $G/N_n$. Then the projection of $G/N_{2n+1}$ onto
$G/N$ is a $n$-covering. Since $G/N$ is coarsely simply connected,
this implies, by Proposition \ref{prop:covering_injective}, that
the projection $G/N_n\to G/N$ is injective for sufficiently large
$n$. Thus eventually $N_n=N$.\end{proof}

\section{Applications and examples}\label{appex}

\begin{prop}
Let $G$ be a locally compact group with finitely many connected
components. Then $G$ is compactly presented.
\end{prop}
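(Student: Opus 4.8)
The plan is to reduce the statement to the connected case, then invoke the structure theory of connected locally compact groups. The key observation is that a locally compact group $G$ with finitely many connected components has its identity component $G^0$ as an open (and closed) subgroup of finite index; hence $G^0$ is cocompact in $G$ (indeed, a finite-index open subgroup is cocompact, since $G = G^0 K$ for a finite set $K$ of coset representatives, which is compact). By the Proposition on cocompact subgroups (which follows from Lemma~\ref{lem:cpt_eng_cocpt} and Corollary~\ref{qinv}), $G$ is compactly presented if and only if $G^0$ is. So it suffices to treat a \emph{connected} locally compact group.

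For a connected locally compact group, I would apply the Gleason--Yamabe theorem: $G^0$ has an open subgroup $G'$ which is a projective limit of Lie groups, and more usefully, every connected locally compact group has a maximal compact normal subgroup $K$ such that $G^0/K$ is a connected Lie group. Using the Proposition that says $G$ is compactly presented iff $G/K$ is (for $K$ a compact normal subgroup), I would further reduce to the case where $G^0$ is a \emph{connected Lie group}. The claim that connected Lie groups are compactly presented was already asserted in the introduction, so the core remaining task is to establish this.

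To show a connected Lie group $L$ is compactly presented, I would use the geometric characterization of Corollary~\ref{qinv} and Proposition~\ref{prop:simplyC_coarse_simplyC}: a compactly generated locally compact group is compactly presented iff it is coarsely simply connected, and a \emph{simply connected} weakly geodesic metric space is coarsely simply connected. A connected Lie group $L$ admits a left-invariant Riemannian metric, making it a geodesic (hence weakly geodesic) metric space quasi-isometric to $L$ with a word metric. If $L$ happens to be simply connected, Proposition~\ref{prop:simplyC_coarse_simplyC} finishes immediately. In general, $L$ is covered by its universal cover $\tilde{L}$, a simply connected Lie group, with discrete central kernel $\pi_1(L)$; since $\tilde{L}$ is compactly presented and compact presentation passes to quotients by discrete central (hence compactly generated normal) subgroups via Lemma~\ref{lem:bp_to_quot}, $L$ is compactly presented as well.

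\textbf{The main obstacle} is the reduction from general connected locally compact groups to Lie groups: this genuinely requires the Gleason--Yamabe machinery (the existence of a maximal compact normal subgroup with Lie quotient), which is not developed in this excerpt and must be cited. Once that structural input is granted, the remaining steps are clean applications of the results proved above — the cocompactness reduction, the compact-normal-subgroup reduction, and the simple-connectedness criterion — so I expect no serious difficulty there. A minor technical point to verify is that the universal cover of a connected Lie group, with a suitable left-invariant metric, is indeed quasi-isometric to the Lie group itself when restricted to a compact generating set, so that the coarse simple connectedness of $\tilde{L}$ (from its simple connectedness and geodesic structure) transfers correctly; but this follows from the discreteness and central position of the covering kernel together with Lemma~\ref{lem:bp_to_quot}.
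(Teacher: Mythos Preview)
Your overall strategy is sound but differs from the paper's, and it contains one genuine logical slip.

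\textbf{The gap.} You write ``discrete central (hence compactly generated normal)''. This implication is false: a discrete central subgroup of a locally compact group need not be finitely generated (consider $\Q$ sitting discretely and centrally in a suitable solenoid). What you actually need is that $\pi_1(L)$ is finitely generated for every connected Lie group $L$. This is true, but it is an independent structural fact --- it follows, for instance, from the very Mostow/Iwasawa result that $L$ retracts onto a maximal compact subgroup $K$, whence $\pi_1(L)\cong\pi_1(K)$ is finitely generated because $K$ is a compact manifold. Once you cite this, Lemma~\ref{lem:bp_to_quot} does apply and your argument closes. The paper's Remark following the Proposition makes exactly this point: compact presentability of connected Lie groups is \emph{equivalent} to finite generation of their fundamental groups, so the latter cannot be taken for free.

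\textbf{Comparison with the paper.} The paper avoids the universal-cover detour entirely. It quotes the structure theorem to pass to a Lie quotient, then invokes Mostow to obtain a maximal compact subgroup $K$ with $G/K$ diffeomorphic to Euclidean space; equipping $G/K$ with a $G$-invariant Riemannian metric makes $G$ quasi-isometric to a \emph{simply connected} geodesic space directly, and Proposition~\ref{prop:simplyC_coarse_simplyC} finishes. This is shorter and sidesteps both the reduction to the identity component and the separate verification that $\pi_1$ is finitely generated. Your route --- cocompact reduction to $G^0$, compact-normal reduction to a Lie group, then universal cover --- is perfectly valid once patched, and has the pedagogical virtue of exercising several of the stability lemmas proved earlier; but it trades one citation (Mostow's $G/K\cong\R^n$) for the same citation used twice (once for the Lie reduction implicitly, once for $\pi_1$ finitely generated).
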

\begin{proof} Clearly, $G$ is compactly generated. It is known (see \cite{MoZi}) that
$G$ has a compact normal subgroup $W$ such that $G/K$ is a Lie
group. By a result of Mostow \cite{Most}, it follows that $G$ has
a maximal compact subgroup $K$ such that $G/K$ is diffeomorphic to
a Euclidean space. Taking a $G$-invariant Riemannian metric on
$G/K$, we obtain that $G$ is quasi-isometric to $G/K$, which is
simply connected and geodesic. By Proposition
\ref{prop:simplyC_coarse_simplyC}, $G$ is coarsely simply
connected, hence is compactly presented.\end{proof}

\begin{rem}
When $G$ is a simply connected Lie group, it also follows from
Lemma \ref{lem:covering} that $G$ is compactly presented. Thus,
the (true) fact that every connected Lie group is compactly presented is
equivalent to each of the following assertions: (i) Every
connected Lie group has a finitely generated fundamental group;
(ii) every discrete, central subgroup in a connected Lie group is
finitely generated. Note that (i) also follows directly from
\cite{Most}.
\end{rem}

\begin{prop}
Let $G$ be a locally compact, compactly generated group of
polynomial growth. Then it is compactly presented.
\end{prop}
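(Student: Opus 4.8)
The plan is to follow the same strategy as for groups with finitely many connected components: exhibit a well-understood metric space quasi-isometric to $G$ and transport compact presentedness to $G$ via the quasi-isometry invariance of Corollary \ref{qinv}. The target model here will be a simply connected nilpotent Lie group $N$. Once $G$ is known to be quasi-isometric to such an $N$, the rest is immediate: $N$ is a connected Lie group, hence compactly presented by the proposition on groups with finitely many connected components (alternatively, $N$ is diffeomorphic to a Euclidean space and geodesic for a left-invariant Riemannian metric, so it is coarsely simply connected by Proposition \ref{prop:simplyC_coarse_simplyC}); since $G$ and $N$ are both locally compact, compactly generated, and quasi-isometric, Corollary \ref{qinv} yields that $G$ is compactly presented.

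Thus essentially all the content lies in the following structural statement, which I expect to be the main obstacle: a compactly generated locally compact group of polynomial growth is quasi-isometric to a simply connected nilpotent Lie group. I would establish this in two stages. First, reduce to Lie groups: by the Montgomery--Zippin theorem (\cite{MoZi}) already used above, $G$ has a compact normal subgroup $K$ with $G/K$ a Lie group; the projection $G\to G/K$ has compact kernel and is therefore a quasi-isometry, while $G/K$ again has polynomial growth, so one may assume $G$ itself is a Lie group. Second, analyse a Lie group of polynomial growth: by the theorem of Guivarc'h and Jenkins, a connected Lie group has polynomial growth if and only if it is of type (R), so its semisimple Levi factor is compact, and after dividing by a maximal compact subgroup the large-scale geometry is governed by a simply connected solvable (and, via the nilshadow, nilpotent) Lie group.

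The delicate point --- and the reason polynomial growth is genuinely needed, beyond the connected case already treated --- is the component group. A compactly generated Lie group can have infinitely many connected components, so $G/K$ need not be diffeomorphic to a Euclidean space, and the maximal-compact argument of the connected case does not apply verbatim. Here polynomial growth enters decisively: the discrete part is controlled by Gromov's theorem (a finitely generated group of polynomial growth is virtually nilpotent, hence a cocompact lattice in a simply connected nilpotent Lie group), and combining this with Losert's structure theory for locally compact groups of polynomial growth produces the desired quasi-isometry with a simply connected nilpotent Lie group. Assembling these inputs and invoking Corollary \ref{qinv} completes the proof.
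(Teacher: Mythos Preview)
Your strategy is sound and is in fact recorded by the paper as a remark immediately after this proposition: a compactly generated locally compact group of polynomial growth is quasi-isometric to a simply connected nilpotent Lie group (the paper cites Breuillard \cite{Bre} for this). Once that is granted, Corollary~\ref{qinv} finishes the argument exactly as you say.

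The paper's own proof, however, is shorter and sidesteps the quasi-isometry entirely. Losert's theorem \cite{Losert} (which already uses Gromov's polynomial growth theorem) exhibits $G$ as an iterated extension compact--by--(connected Lie)--by--(discrete virtually nilpotent). Each of the three layers is compactly presented: compact groups trivially, connected Lie groups by the preceding proposition, and finitely generated virtually nilpotent groups because they are finitely presented. Since compact presentedness is stable under extensions (Proposition~\ref{ext}), $G$ is compactly presented. That is the whole argument.

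Two comments on your sketch. First, the step ``$G$ has a compact normal $K$ with $G/K$ a Lie group'' is \emph{not} Montgomery--Zippin: that result needs $G/G_0$ compact, which is precisely what can fail here. For polynomial-growth groups this reduction is already part of Losert's theorem and requires Gromov as input. Second, going from Losert's extension structure to a quasi-isometry with a \emph{single} simply connected nilpotent Lie group is genuinely nontrivial; your appeal to nilshadows and ``combining these inputs'' is exactly where the substance of \cite{Bre} lies. So your route is correct but strictly more expensive: it consumes the same structural input (Losert) and then an additional geometric theorem, where the paper gets by with the elementary stability of compact presentedness under extensions already established in Section~\ref{cpg}.
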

\begin{proof} By a result of Losert \cite{Losert} (relying on a result of
Gromov \cite{Gromov81}), $G$ lies in an iterate extension
compact-by-(connected Lie group)-by-(discrete virtually
nilpotent). Thus $G$ is compactly presented.\end{proof}

\begin{rem}
It can be shown (see \cite{Bre}) that a locally compact, compactly generated group
of polynomial growth is quasi-isometric to a simply connected
nilpotent Lie group.
\end{rem}

\begin{exe}\label{slh}
Let $\K$ be a non-Archimedean local field (e.g. $\K=\Q_p$). Define
the Heisenberg group $H_3(\K)$ as follows: as a topological space,
$H_3(\K)=\K^3$, and its group law is given by
$(x_1,y_1,z_1)(x_2,y_2,z_2)=(x_1+x_2,y_1+y_2,z_1+z_2+x_1y_2-x_2y_1)$.
The group $\SL_2(\K)$ acts on $H_3(\K)$ by $\begin{pmatrix}
  a & b \\
  c & d \\
\end{pmatrix}\cdot(x,y,z)=(ax+by,cx+dy,z)$. It is easy to check
that the semidirect product $\SL_2(\K)\ltimes H_3(\K)$ is
compactly generated. Its centre coincides with the centre
$\{0\}\times\{0\}\times\K$ of $H_3(\K)$, hence is not compactly
generated as a normal subgroup of $\SL_2(\K)\ltimes H_3(\K)$.
Thus, by Proposition \ref{prop:ker_comp_pres_quot}, the quotient
by the centre, namely $\SL_2(\K)\ltimes\K^2$, is compactly generated but
not compactly presented.
\end{exe}

\begin{prop}
Let $G$ be a Gromov-hyperbolic, compactly generated locally compact group. Then $G$ is compactly presented.
\end{prop}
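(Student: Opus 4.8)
The plan is to reduce the statement, by means of the previous section, to a purely large-scale geometric assertion, and then to feed in the contractibility of the Rips complex of a hyperbolic space. First I would fix a symmetric compact generating subset $S$ and equip $G$ with the associated word metric. Since $G$ is locally compact and compactly generated, the corollary following Proposition \ref{coarips} applies and tells us that $G$ is compactly presented if and only if it is coarsely simply connected. So the whole problem becomes: show that $G$, as a metric space, is coarsely simply connected.

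Next I would replace $G$ by a geodesic model with the same large-scale geometry. The word metric is coarsely geodesic --- if $d_S(x,y)=n$, then writing $y=xs_1\cdots s_n$ with $s_i\in S$ gives a $1$-path $x=x_0,x_1,\dots,x_n=y$ realizing the distance --- so $G$ is quasi-isometric to the geodesic metric graph $\Gamma(G,S)$, call it $Y$. As $G$ is Gromov-hyperbolic and hyperbolicity is a quasi-isometry invariant among coarsely geodesic spaces, $Y$ is a $\delta$-hyperbolic geodesic space for some $\delta$. I would then invoke the classical theorem (Rips; see \cite[Section~1.7]{G} and the discussion there) that for a $\delta$-hyperbolic geodesic space the Rips complex $R_t(Y)_{\textnormal{top}}$ is contractible, in particular simply connected, once $t$ exceeds a fixed linear function of $\delta$. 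By the implication $(3)\Rightarrow(1)$ of Proposition \ref{coarips}, $Y$ is coarsely simply connected.

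Finally, coarse simple connectedness is a quasi-isometry invariant (the Proposition preceding Corollary \ref{qinv}), so it transfers from $Y$ back to $G$. Hence $G$ is coarsely simply connected, and by the corollary following Proposition \ref{coarips} it is compactly presented, as desired.

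The one substantial ingredient is the geometric input of the second paragraph, the simple connectedness of the Rips complex of a hyperbolic space; this is the only place the hypothesis is used. Its mechanism is the standard triangle-filling argument: joining a base-point to each vertex of an $r$-loop by geodesics produces elementary triangles two of whose sides $\delta$-fellow-travel, and $\delta$-thinness lets one zip these sides together and contract each triangle through loops whose size is bounded in terms of $r$ and $\delta$ alone. The crux, and the point one must either reprove carefully or cite, is precisely this uniformity of the filling constant --- that it depends only on $r$ and $\delta$ and not on the individual loop.
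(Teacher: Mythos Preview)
Your proof is correct and follows essentially the same strategy as the paper's one-line argument: cite Gromov's contractibility of the Rips complex for hyperbolic spaces and then apply Proposition~\ref{coarips} (equivalently Proposition~\ref{bpresrips}). The detour through the geodesic Cayley graph and quasi-isometry invariance is harmless but unnecessary, since the word metric on $G$ is already $1$-coarsely geodesic and Gromov's Rips-complex result applies to it directly.
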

\begin{proof}
Gromov then shows \cite[1.7.A]{G} that the Rips complex $R_t(G)_{\textnormal{top}}$ is contractible for $t$ large enough. So Proposition \ref{coarips} applies.
\end{proof}

\appendix

\section{Some locally compact groups not quasi-isometric to homogeneous locally finite graphs}\label{secho}

We here justify the following well-known result

\begin{prop}
Let $G$ is a simply connected graded nilpotent Lie group with no lattice (i.e.\ the Lie algebra has no form over the rationals, see \cite[Theorem~2.12]{Rag}). Then 
$G$ is not quasi-isometric to a homogeneous graph.
\end{prop}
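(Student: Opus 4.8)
The plan is to argue by contradiction, combining the results of Trofimov and Pansu in the way announced in the abstract. Suppose that $G$ is quasi-isometric to a homogeneous (i.e.\ vertex-transitive) locally finite graph $\Gamma$. Being nilpotent, $G$ has polynomial growth; since growth is a quasi-isometry invariant among compactly generated groups and locally finite graphs, $\Gamma$ also has polynomial growth. The first main input is then Trofimov's theorem: a connected, locally finite, vertex-transitive graph of polynomial growth carries an automorphism-invariant equivalence relation with finite classes whose quotient is quasi-isometric to the graph and is a Cayley graph of a finitely generated virtually nilpotent group $\Lambda$. Collapsing the finite blocks does not change the large-scale geometry, so $G$ is quasi-isometric to $\Lambda$.

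Next I would replace $\Lambda$ by a simply connected nilpotent Lie group. A finitely generated virtually nilpotent group is virtually poly-$\mathbf{Z}$, hence contains a finite-index torsion-free nilpotent subgroup $\Lambda_0\le\Lambda$, which is quasi-isometric to $\Lambda$. Malcev's theorem realizes $\Lambda_0$ as a cocompact lattice in its real Malcev completion $N$, a simply connected nilpotent Lie group; by Lemma \ref{lem:cpt_eng_cocpt} and the remark following it, the inclusion $\Lambda_0\hookrightarrow N$ is a quasi-isometry. Hence $G$ is quasi-isometric to $N$. Because $N$ contains the lattice $\Lambda_0$, its Lie algebra $\mathfrak{n}$ admits a form over $\Q$; since the lower central series filtration is defined over $\Q$, the associated Carnot-graded Lie algebra $\operatorname{gr}(\mathfrak{n})$ also admits a form over $\Q$.

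The second main input is Pansu's quasi-isometry invariance: two quasi-isometric simply connected nilpotent Lie groups have isomorphic associated Carnot-graded Lie algebras, their common asymptotic cone being the Carnot group attached to this graded algebra. Applying this to $G$ and $N$ gives $\operatorname{gr}(\g)\cong\operatorname{gr}(\mathfrak{n})$. Here the hypothesis that $G$ is graded enters decisively: a stratified (graded) Lie algebra is isomorphic to its own associated graded, so $\g\cong\operatorname{gr}(\g)$. Chaining the isomorphisms, $\g\cong\operatorname{gr}(\mathfrak{n})$ inherits a form over $\Q$, whence Malcev's theorem produces a lattice in $G$, contradicting the hypothesis. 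This is the contradiction sought.

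The delicate points, rather than routine, are the two external theorems and their interface. Invoking Trofimov correctly means extracting from his structure theorem a genuine quasi-isometry onto a finitely generated nilpotent group, i.e.\ verifying that passage to the finite imprimitivity blocks is a quasi-isometry; and the citation of Pansu must be in the precise form that the Carnot-graded Lie algebra — equivalently the bi-Lipschitz class of the asymptotic cone — is a quasi-isometry invariant, which itself rests on Pansu's rigidity that bi-Lipschitz Carnot groups are isomorphic. The subtlety most worth stressing is the role of the grading: it is exactly what transports the ``no rational form'' hypothesis from $\g$ to $\operatorname{gr}(\g)$, since for a general non-stratified nilpotent Lie algebra a rational form on the associated graded need not descend to the algebra itself.
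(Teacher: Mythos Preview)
Your proof is correct and follows essentially the same route as the paper's own argument: reduce via Trofimov (plus Gromov) to a quasi-isometry with a finitely generated torsion-free nilpotent group, pass to its Malcev completion $N$, use Pansu to identify the associated Carnot-graded Lie algebras of $G$ and $N$, and then exploit the graded hypothesis on $G$ to transport the rational form of $\operatorname{gr}(\mathfrak{n})$ back to $\g$. Your write-up is in fact more explicit than the paper's about why the graded hypothesis is essential and about the intermediate step from virtually nilpotent to torsion-free nilpotent, but the logical skeleton is the same.
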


An example of a simply connected nilpotent Lie group with no lattice is given in \cite[Remark~2.14]{Rag}; the examples given there are nilpotent of class two and therefore are obviously graded.


\begin{proof}Trofimov \cite{Tro} proved, relying on Gromov's polynomial growth theorem \cite{Gromov81} that a homogeneous (i.e.~vertex-transitive) graph with polynomial growth is quasi-isometric to a finitely generated torsion-free nilpotent group $\Gamma$. So $G$ is quasi-isometric to the Malcev closure $N$ of $\Gamma$ (a simply connected nilpotent Lie group in which $\Gamma$ is a lattice). Then $M$ having a lattice, it has a form over the rationals, and therefore so does the associated graded nilpotent group $M_g$. Then by results of Pansu \cite{Pan}, $M_g$ is isomorphic to $G$. We thus get a contradiction.\end{proof}

Many other Lie groups without cocompact lattices are likely not to be quasi-isometric to any homogeneous graphs, including
\begin{itemize}
\item Semidirect products $G_\alpha=\R^2\rtimes\R$ with the action by diagonal matrices with coefficients $(e^t,e^{\alpha t}$, excluding the special cases $\alpha=-1,0,1$. (The case $\alpha>0$ should be easier to tackle as then $G$ is negatively curved.)
\item Semidirect products $\R^n\rtimes\textnormal{GL}(n,\R)$ or $\R^n\rtimes\SL(n,\R)$ for $n\ge 2$.
\end{itemize}

Note that in general, a space is quasi-isometric to a homogeneous graph of bounded degree if and only if it is quasi-isometric to a totally discontinuous compactly generated locally compact group: indeed, any such graph is quasi-isometric to its isometry group, and conversely for any totally discontinuous compactly generated locally compact group $G$, if $K$ is an open compact subgroup then $G/K$ carries a left-invariant bounded degree graph structure (this elementary construction has been known for decades but the older reference I could find is \cite[Chap.~11, p.~150]{Mon}).


\end{document}